\newcommand{\uhp}{\mathbb{H}}
\newcommand{\R}{\mathbb{R}}
\newcommand{\C}{\mathbb{C}}
\newcommand{\D}{\mathbb{D}}
\newcommand{\McC}{\raise.5ex\hbox{c}}
\newcommand{\T}{\mathbb{T}}
\newtheorem{theorem}{Theorem}[section]
\newtheorem{prop}[theorem]{Proposition}
\newtheorem{lemma}[theorem]{Lemma}
\theoremstyle{remark}
\newtheorem{example}[theorem]{Example}
\newtheorem{claim}[theorem]{Claim}
\newtheorem{remark}[theorem]{Remark}
\title{Stable polynomials and admissible numerators in product domains}
\author[Bickel]{Kelly Bickel}
\address{Department of Mathematics, Bucknell University, Lewisburg, PA 17837,  USA}
\email{kelly.bickel@bucknell.edu}
\author[Knese]{Greg Knese} 
\address{Department of Mathematics, Washington University in St Louis, St Louis, MO 63130, USA}
\email{geknese@wustl.edu}
\author{James Eldred Pascoe} 
\address{Department of Mathematics, Drexel University, Philadelphia, PA 19104, USA}
\email{jep362@drexel.edu}
\author{Alan Sola}
\address{Department of Mathematics, Stockholm University, 10691 Stockholm, Sweden}
\email{sola@math.su.se}
\date{\today}
 \subjclass[2020]{13A15, 13B22, 32A40, 32B05.}
\keywords{Stable polynomials, bounded rational functions, ideal membership, integral closure.}
\thanks{KB partially supported by NSF grant DMS-2000088.\\ GK partially supported by NSF grant DMS-2247702.\\
JEP  partially supported by NSF grant DMS-2319010
\\AS acknowledges support from Ivar Bendixsons stipendiefond f\"or docenter.
} 
\dedicatory{Dedicated to John E. M\McC Carthy on the occasion of his sixtieth birthday}
\begin{document}

\begin{abstract}
Given a polynomial $p$ with no zeros in the polydisk, or equivalently the poly-upper half-plane, 
we study the problem of determining the ideal of polynomials $q$ 
with the property that the rational function $q/p$ is bounded near a boundary zero of $p$. 
We give a complete description of this ideal of numerators 
in the case where the zero set of $p$ is smooth 
and satisfies a non-degeneracy condition.
We also give a description of the 
ideal in terms of 
an integral closure when $p$ has an isolated zero on the distinguished
boundary.
 Constructions of multivariate stable 
polynomials are presented to illustrate sharpness of our results 
and necessity of our assumptions.
\end{abstract}
\maketitle

\section{Introduction}
Let $d\geq 1$ and $\Omega \subset \mathbb{C}^d$ be a domain. A polynomial $p\in \mathbb{C}[z_1,\ldots, z_d]$ is said to be {\it stable} 
with respect to $\Omega$ if $p(z)\neq 0$ for $z\in \Omega$. Stable polynomials have many important applications (see, for instance, the survey \cite{wag} and the introduction in \cite{BKPS} and the references provided there); among others, they serve as denominators of rational functions that are holomorphic in $\Omega$. If $p$ is in fact strictly stable, meaning that $p(z) \neq 0$ for $z\in \overline{\Omega}$, then $q/p$ is automatically analytic in $\Omega$ and smooth on its closure. However, if $p$ is stable but has zeros on $\partial \Omega$, then one is immediately faced with the interesting problem of finding conditions on $q\in \mathbb{C}[z_1,\ldots, z_d]$ that guarantee that $q/p$ has good properties in $\Omega$ in addition to merely being analytic.

In this article we study what we call the {\it admissible numerator problem} on one of the standard reference domains in $\mathbb{C}^d$, the unit polydisk 
\[\D^d =\{(z_1,\dots, z_d) \in \C^d: |z_1|<1,\dots, |z_d|<1\}.\]
Given a stable $p$, our task is to determine when a rational function $q/p$
in three or more variables is bounded on $\mathbb{D}^d$. This is part of a more general program to
understand singularities of rational
functions on a boundary of a domain and what constraints
they force on the function.  The papers \cite{BKPS, Kollar} addressed the two-variable admissible numerator problem on
the bidisk, and together gave a full characterization of admissible numerators
$q\in \mathbb{C}[z_1,z_2]$ associated to a polynomial stable on the bidisk.  
This was possible because of a complete
local description of the zero set of a stable polynomial 
in $\mathbb{D}^2$ with zeros on the boundary (see \cite[Section 2]{BKPS} for details).  
In dimensions three or higher, such a local 
description is lacking, and, as we will see, the admissible numerator problem becomes more complicated. For a two-variable $L^{\mathfrak{p}}$ version of the problem we consider here, see \cite{Kprep} which also contains an alternative proof of the characterization of the ideal of admissible numerators.

\begin{remark}
A rational function $q/p$, analytic on $\D^{d}$, 
is bounded on $\D^{d}$ if and only if it is essentially bounded
on $\T^{d}$.  This follows, for instance, from classical
Hardy space theory on the polydisk since $p$ is outer.
A broad strategy for approaching the admissible numerator
problem is then to understand when $q/p$ is bounded
in a neighborhood within $\T^{d}$ 
of each point $w\in Z_{p}\cap \T^{d}$;
equivalently, we can try to understand when $q/p$ is bounded
on a neighborhood of $w \in Z_{p}\cap \T^{d}$ within $\C^{d}$ intersected with 
$\D^{d}$. By compactness of $\T^{d}$, if $q/p$ is essentially
bounded in a neighborhood of every point of $\T^{d}$, then
$q/p$ will be essentially bounded on all of $\T^{d}$.
\end{remark}

Thus, as in \cite{BKPS} we can examine a local problem and we perform conformal
maps in order to transfer to the biholomorphically equivalent situation of
the poly-upper half-plane 
\[
\uhp^{d+1} =\{(x_1,\dots, x_d,z): \Im x_1,\dots, \Im x_d, \Im z>0\},
\]
where we have singled out a distinguished variable $z$. When we restrict to the three variable setting, we will use the notation $(x,y,z)$ and the distinguished variable will still be $z$.
In the general setting, given $p,q \in \C[x_1,\dots, x_d,z]$
where $p(x_1,\dots, x_d, z) \ne 0$ for
$\Im x_j >0, \Im z>0$ and $p(0)=0$,
we wish to know when $q(x,z)/p(x,z)$
is bounded on a neighborhood
of $0\in \C^{d+1}$ intersected with $\uhp^{d+1}$. 
Working in the specific setting of a product domain gives us 
several advantages as regards the admissible numerator problem. 
Given a stable $p$, it is easy to exhibit at least one non-trivial 
admissible numerator in the form of the reflection polynomial 
associated with $p$, given in the poly-upper half-plane case by
$\bar{p}(w)=\overline{p(\bar{w})}$, 
and so we immediately get existence of admissible numerators 
other than $p$ when $p$ is not a multiple of $\bar{p}$. 
In addition, there are several methods for constructing 
stable polynomials in the polydisk or poly-upper half-plane 
with prescribed properties (see for instance \cite{BPS2, pascoe18,S23}, 
and results presented below). 
As we will see, however, not all admissible numerators can 
generally be produced from $p$ and $\bar{p}$, or indeed just two fixed polynomials.

We have singled out a distinguished variable $z$
because in this article we investigate the
simplest type of boundary singularity,
namely,
when $\frac{\partial p}{\partial z}(0) \ne 0$
and the zero set $\mathcal{Z}_p=\{(x,z): p(x,z) = 0\}$
is parametrized via
an analytic function by the implicit function theorem.
Geometrically we are assuming the zero set
of $p$ is a smooth variety through $0$.
In particular, by the Weierstrass 
preparation theorem we can factor
\begin{equation}
p(x,z) = u(x,z)\left(z + \phi(x)\right)
\label{pfac}
\end{equation}
for $\phi(x) \in\C\{x_1,\dots, x_d\}$ with $\phi(0)=0$, and $u(x,z) \in \C\{x_1,\dots,x_d,z\}$ having $u(0,0)\ne 0$.
Here $\C\{x_1,\dots, x_d\}$ and related variations denote rings of
power series convergent in a neighborhood of $0$ in the given variables.

The first step is to give a description of the relevant $\phi$.

\begin{prop}\label{phithm}
Suppose $\phi(x_1,\dots,x_d) \in \C\{x_1,\dots,x_d\}$; i.e.\ is analytic near $0$.  
Assume $\phi(0)=0$, 
and $z + \phi(x)$ has no zeros for $z \in \uhp$ and (small) $x \in \uhp^d$.
Then, $\nabla \phi(0) \in [0,\infty)^d$ and $\Im \phi$ is
non-negative on a neighborhood of $0$ within $\R^{d}$.
In addition, either:
\begin{itemize}
\item $\phi \in \R\{x_1,\dots,x_d\}$; i.e.\ it has all
real coefficients or 
\item $\phi$ has the form
\[
\phi(x) = \phi_1(x) + \cdots +\phi_{2L-1}(x) + \phi_{2L}(x)+\cdots
\]
where each $\phi_j$ is a homogeneous polynomial of degree $j$,
$\phi_1,\dots, \phi_{2L-1} \in \R[x_1,\dots, x_d]$ have real coefficients,
and $\Im \phi_{2L} \not\equiv 0 $ is non-negative on $\R^d$.
\end{itemize}
Furthermore, 
if  $T \subset \{1,\dots, d\}$ is the set of components of $\nabla \phi(0)$ that are zero, then $\phi(\sum_{j\in T} x_j e_j) \equiv 0$. (Here $e_j$ are standard basis vectors for $\C^d$.)
\end{prop}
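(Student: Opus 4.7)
\emph{Plan.} The central tool is to push the nonvanishing condition to the boundary in $x$. Because $\partial_z(z+\phi)=1$, if $(x_0,z_0)$ were a zero of $z+\phi$ with $z_0\in\uhp$ and $x_0\in\overline{\uhp}^d$, then continuity of $-\phi$ near $x_0$ would produce nearby zeros $(x,-\phi(x))$ with $x\in\uhp^d$, contradicting the hypothesis. Hence $-\phi(x)\notin\uhp$ for every $x\in\overline{\uhp}^d$ near $0$; in particular, $\Im\phi(x)\geq 0$ for all $x\in\R^d$ near $0$. I will derive every subsequent conclusion from this single pointwise inequality.

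\emph{Gradient and dichotomy.} Expand $\phi=\phi_1+\phi_2+\cdots$ in homogeneous polynomials. For $x=te_j$ with $t\in\R$ small, $\Im\phi(te_j)=t\,\Im\partial_j\phi(0)+O(t^2)\geq 0$, so letting $t$ range over both signs forces $\partial_j\phi(0)\in\R$. For $x=ise_j\in\overline{\uhp}^d$ with $s>0$, $\Im\phi(ise_j)=s\,\partial_j\phi(0)+O(s^2)\geq 0$, so $\partial_j\phi(0)\geq 0$. Suppose now that $\phi_1,\ldots,\phi_{k-1}$ have real coefficients but $\phi_k$ does not. For $x_0\in\R^d$ and $t\in\R$ small, $\Im\phi(tx_0)=t^k\,\Im\phi_k(x_0)+O(t^{k+1})\geq 0$. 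If $k$ were odd, varying the sign of $t$ would force $\Im\phi_k$ to vanish on $\R^d$; since $\Im\phi_k|_{\R^d}$ is the real polynomial $\sum_{|\alpha|=k}(\Im c_\alpha)x^\alpha$, this would make every coefficient of $\phi_k$ real, contradicting our choice of $k$. Hence $k=2L$ is even, and the inequality itself yields $\Im\phi_{2L}\geq 0$ on $\R^d$.

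\emph{Vanishing on the $T$-subspace.} Let $\psi(y)=\phi\bigl(\sum_{j\in T}y_je_j\bigr)$ for $y=(y_j)_{j\in T}$. The boundary extension shows $\psi$ inherits the same hypothesis in $|T|$ variables, and by construction $\nabla\psi(0)=0$. Fix $y_0=is$ with $s\in(0,\infty)^{|T|}$ and let $g(t)=\psi(ty_0)$, analytic near $t=0$. For $t=re^{i\theta}$ with $\theta\in(-\pi/2,\pi/2)$ and $r>0$ small, $ty_0\in\uhp^{|T|}$, so $\Im g(t)\geq 0$. Since $\psi$ has no constant or linear part, $g(t)=\sum_{k\geq 2}\psi_k(y_0)t^k$; if $g\not\equiv 0$, let $k_0\geq 2$ be smallest with $a:=\psi_{k_0}(y_0)\neq 0$. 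Then $\Im g(re^{i\theta})=r^{k_0}\Im(ae^{ik_0\theta})+O(r^{k_0+1})$, but $\theta\mapsto k_0\theta$ sweeps an interval of length $k_0\pi\geq 2\pi$, so $\Im(ae^{ik_0\theta})<0$ for some $\theta\in(-\pi/2,\pi/2)$, contradicting $\Im g\geq 0$. Thus $\psi_k(y_0)=0$ for every $k\geq 2$; by homogeneity $\psi_k(is)=i^k\psi_k(s)$, so each $\psi_k$ vanishes on the open cone $(0,\infty)^{|T|}$ and hence identically as a polynomial, so $\psi\equiv 0$. This last step is the main obstacle: everything else is direct once the boundary inequality is in hand, but the argument-sweep requires exactly $k_0\geq 2$, which in turn is supplied by the vanishing of $\nabla\psi(0)$ — the very condition defining $T$.
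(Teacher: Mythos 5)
Your proof is correct and follows essentially the same route as the paper: push the nonvanishing hypothesis to the boundary to get $\Im\phi\geq 0$ on $\overline{\uhp}^d$, read off the gradient and the parity dichotomy from homogeneous expansions, and handle the $T$-subspace by restricting and noting the linear term vanishes. The only cosmetic difference is that where the paper invokes its Lemma~3.1 (a one-variable self-map of $\uhp$ fixing $0$ with vanishing derivative must be identically zero), you reprove that fact inline via the argument-sweep estimate $\Im(ae^{ik_0\theta})<0$ for some $\theta$ when $k_0\geq 2$.
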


When we write $\Im \phi$ we are referring
to the analytic function obtained by extracting the
imaginary parts of the coefficients of $\phi$,
namely
\[
(\Im \phi)(x) := \frac{1}{2i}(\phi(x) - \overline{\phi(\bar{x}}))
\]
which does equal $\Im (\phi(x))$ when $x \in \R^d$.
Note that $\nabla \phi(0) = 0$ implies $\phi\equiv 0$.
Note also, if $\phi$ has an isolated zero at $0\in \R^d$ then
necessarily $\nabla \phi(0) \in (0,\infty)^d$.

From here we wish to describe the following {\it ideal of admissible numerators}
\[
\mathcal{I}^{\infty}_p = \{ q(x,z) \in \C\{x_1,\dots, x_d,z\}: 
q/p \text{ is bounded on } \uhp^{d+1}\cap \D_{\epsilon}^{d+1}
\text{ for some } \epsilon>0\}
\]
in as simple terms as possible.
Here $\D_{\epsilon} = \{z \in \C: |z|<\epsilon\}$. 
Because we are working in a small neighborhood of the origin,
most all inequalities to follow will only be proven/stated for 
inputs sufficiently close to the origin.
We shall use the common notations 
\[
f(x) \gtrsim g(x),\quad  f(x) \lesssim g(x), \quad f(x) \asymp g(x)
\]
to denote
inequalities of the form 
\[
f(x) \geq C g(x), \quad f(x) \leq C g(x), \quad c f(x) \leq g(x) \leq Cg(x)
\]
near $0$ for positive constants $c,C>0$ whose values are of no importance.
\begin{remark}\label{realremark}
When $\phi \in \R\{x_1,\dots, x_d\}$
in Proposition \ref{phithm} then the ideal $\mathcal{I}_p^{\infty}$
is the principal ideal $(p)$ since all numerators will vanish on the smooth
variety $z+\phi(x)=0$. 
This follows from the Weierstrass division theorem.
\end{remark}

Outside of the case of Remark \ref{realremark}, 
the simplest polynomials to consider are those where $\Im \phi_{2L}(x)$ is positive definite on $\R^d$
in the sense that $\Im \phi_{2L}(x)$ is strictly positive on $\{x\in \R^d:|x|=1\}$
or equivalently $\Im \phi_{2L}(x) \asymp |x|^{2L}$. 
In this case, the zero set of $z+\phi(x)$ on $\R^d$ is automatically isolated at $0$. 

\begin{theorem} \label{thmImpos}
Assume $p(x_1,\dots, x_d,z) \in \C[x_1,\dots, x_d,z]$ 
has no zeros in $\uhp^{d+1}$, $p(0)=0$, 
and $\frac{\partial p}{\partial z}(0)\ne 0$.
Parametrize the zero set of $p$ near $0$ as in 
\eqref{pfac} and
Proposition \ref{phithm}, i.e. as $z+\phi(x) = 0$.
If $\phi(x) \notin \R\{x_1,\dots, x_d\}$
and the first homogeneous term $\phi_{2L}$
with non-trivial imaginary part
satisfies the condition that $\Im \phi_{2L}$ is positive definite,
then the ideal of admissible numerators for $p(x,z)$ is given by
\[
\mathcal{I}^{\infty}_{p} = (z + q(x), (x)^{2L})
\]
where $q(x) = \sum_{j<2L} \phi_{j}(x)$ and $(x)^{2L}$ is the ideal generated by powers $x^{\alpha}$ for $|\alpha| = 2L$.
\end{theorem}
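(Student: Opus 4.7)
The plan is to reduce to $p(x,z)=z+\phi(x)$ by absorbing the unit $u$ from \eqref{pfac}; since $|u|\asymp 1$ near $0$, $|p|\asymp|z+\phi|$ and membership in $\mathcal{I}^{\infty}_p$ is unaffected. Writing $\psi(x):=\phi(x)-q(x)=\phi_{2L}(x)+\phi_{2L+1}(x)+\cdots$ so that $|\psi(x)|\lesssim|x|^{2L}$, the main analytic ingredient is the lower bound
\[
|z+\phi(x)|\gtrsim|x|^{2L}\qquad\text{on }\uhp^{d+1}\text{ near the origin}.
\]
Granting this, the inclusion $(z+q(x),(x)^{2L})\subseteq\mathcal{I}^{\infty}_p$ is immediate: the identity $(z+q)/(z+\phi)=1-\psi(x)/(z+\phi(x))$ shows $z+q$ is admissible, and for $|\alpha|=2L$ one has $|x^\alpha|/|z+\phi(x)|\leq|x|^{2L}/|z+\phi(x)|\lesssim 1$.

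To establish the lower bound, combine $|z+\phi(x)|\geq\Im z+\Im\phi(x)\geq\Im\phi(x)$ with the claim $\Im\phi(x)\gtrsim|x|^{2L}$ for $x$ near $0$ in $\uhp^d$. First, Proposition~\ref{phithm} together with positive definiteness of $\Im\phi_{2L}$ forces $\nabla\phi(0)\in(0,\infty)^d$: any $j$ with $\partial_j\phi(0)=0$ would lie in $T$, so $\phi(\sum_{k\in T}x_k e_k)\equiv 0$ would yield $\Im\phi_{2L}(e_j)=0$, contradicting positive definiteness. Now write $x=a+ib$ with $a\in\R^d$ and $b\in(0,\infty)^d$ small. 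The linear part gives $\Im\phi_1(a+ib)=\phi_1(b)\asymp|b|$, while $\phi_{2L}$ produces $\Im\phi_{2L}(a+ib)$, whose expansion has dominant term $\Im\phi_{2L}(a)\asymp|a|^{2L}$ together with cross terms bounded by $|a|^{2L-k}|b|^k$ for $k\geq 1$. The intermediate $\phi_j$ ($2\leq j\leq 2L-1$) have real coefficients, so $\Im\phi_j(a+ib)$ is odd in $b$ and its terms are bounded by $|a|^{j-k}|b|^k$ with $k$ odd, $1\leq k\leq j$. Every such cross term is controlled by Young's inequality: $|a|^{j-k}|b|^k\leq\epsilon|a|^{2L}+C_\epsilon|b|^r$ with $r\geq 1$, and $|b|^r\leq|b|$ for small $|b|$. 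Choosing $\epsilon$ small absorbs the errors into the two dominating positive contributions $\phi_1(b)$ and $\Im\phi_{2L}(a)$, producing $\Im\phi(a+ib)\gtrsim|a|^{2L}+|b|\gtrsim|x|^{2L}$.

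For the reverse inclusion, suppose $h\in\mathcal{I}^{\infty}_p$ and apply Weierstrass division in $z$ by the $z$-regular polynomial $z+q(x)$: $h(x,z)=A(x,z)(z+q(x))+r(x)$ with $A\in\C\{x_1,\ldots,x_d,z\}$ and $r(x)=h(x,-q(x))\in\C\{x_1,\ldots,x_d\}$. Since $A$ is bounded near $0$ and $(z+q)/(z+\phi)$ is bounded by the first inclusion, the quotient $A(z+q)/(z+\phi)$ is bounded, so $r(x)/(z+\phi(x))$ is bounded as well. Fixing $x\in\uhp^d$ and taking the infimum over $z\in\uhp$ (where $\inf_z|z+\phi(x)|=\Im\phi(x)$) yields $|r(x)|\leq M\,\Im\phi(x)$; letting $x$ tend to a real boundary point $a\in\R^d$ gives $|r(a)|\leq M\,\Im\phi(a)=M\,\Im\phi_{2L}(a)+O(|a|^{2L+1})\lesssim|a|^{2L}$ on a real neighborhood of $0$. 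A homogeneity argument on the Taylor expansion of $r$ then forces $r\in(x)^{2L}$, so $h=A(z+q)+r\in(z+q,(x)^{2L})$. The principal technical obstacle is the lower bound itself: one must carefully organize the combinatorics of the cross terms $|a|^{j-k}|b|^k$ in $\Im\phi_j(a+ib)$ across $j\leq 2L$ and balance them against $|a|^{2L}$ and $|b|$ by Young's inequality.
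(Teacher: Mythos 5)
Your argument is correct, but it takes a noticeably different route from the paper. The paper proves Theorem \ref{thmImpos} as an almost immediate corollary of two general results established earlier: Proposition \ref{Im-est}, which gives $\Im(\phi(u+iv)) \gtrsim \Im\phi(u) + |v|$ whenever $\nabla\phi(0)\in(0,\infty)^d$ and $\Im\phi\geq 0$ on $\R^d$, and Proposition \ref{genprop}, which identifies $\mathcal{I}^\infty_p$ with $(z + \Re\phi(x), \mathrm{IC}(\Im\phi(x)))$ in that generality. The paper's proof of Proposition \ref{Im-est} replaces the Taylor--Young bookkeeping in your argument with a single integral estimate $f(u+iv)-f(u)=\int_0^1 \nabla f(u+itv)\cdot iv\,dt$ for real-coefficient $f$ vanishing to order two, giving $o(|v|)$ error terms cleanly without tracking the combinatorics of cross terms $|a|^{j-k}|b|^k$. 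The paper's converse direction also differs slightly: it performs Weierstrass division by $z+\Re\phi(x)$ (a power series), sets $z=-\Re\phi(x)$ on $\R^{d+1}$ to deduce $|q_0(x)|\lesssim|\Im\phi(x)|$, and then quotes that $\mathrm{IC}(\Im\phi)=(x)^{2L}$ when $\Im\phi\asymp|x|^{2L}$; you instead divide by the polynomial $z+q(x)$ and take an infimum over $z\in\uhp$, reaching the same conclusion. What the paper's modular structure buys is reuse: Propositions \ref{Im-est} and \ref{genprop} are applied again verbatim to prove Theorem \ref{thm:isoideal}, where $\Im\phi_{2L}$ is no longer assumed positive definite and your Young's-inequality computation would not directly apply. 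What your approach buys is self-containment and concreteness for this particular theorem. Both are valid; just note that your estimate $\Im\phi(a+ib)\gtrsim|a|^{2L}+|b|$ is exactly the specialization of Proposition \ref{Im-est} to the positive-definite case.
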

This result already covers many natural examples such as the $\mathbb{D}^d$-stable polynomial 
$d-\sum_{j=1}^{d}z_j$. 
We present the details for $d=3$, using the variables $(x,y,z)$ to lighten notation.

\begin{example}\label{faveex}
Consider the tridisk example $3-z_1-z_2-z_3$ which converts
to the tri-upper-half-plane stable polynomial
\[
\begin{aligned}
p(x,y,z) &= x+y+z - 2i (xy+xz+yz) - 3x yz \\
&= (1-2i(x+y) - 3x y)\left(z + \underset{=: \phi(x,y)}{\underbrace{\frac{x+y-2i x y}{1-2i(x+y) -3x y}}}\right)\\
\end{aligned}
\]
where
\[
\phi(x ,y) 
= x +y + 2i(x^2+x y+y^2) + \text{ higher order terms}.
\]
We have $\Im \phi_2(x,y) = 2(x^2+xy+y^2)$ which is positive 
definite.
Therefore, the set of functions $q(x ,y,z)$ analytic at $(0,0,0)$ such that 
\[
\frac{q(x,y,z)}{p(x,y,z)}
\]
is locally bounded near $(0,0,0)$ in $\R^3$ is exactly the ideal
\[
(x+y+z, x^2, x y, y^2)
\]
within the ring of convergent power series.  This is simply the functions with
first order term given by a multiple of $x+y+z$. 

Converting back to $\D^3$ and the polynomial
$3-z_1-z_2-z_3$ we have that the ideal of numerators
$q(z_1,z_2,z_3)$ such that $q(z_1,z_2,z_3)/(3-z_1-z_2-z_3)$ is bounded on $\D^3$ is generated by
\[
(3-z_1-z_2-z_3, (1-z_1)^2, (1-z_1)(1-z_2), (1-z_2)^2). \qquad \diamond
\]
\end{example}

The description of $\mathcal{I}^{\infty}_p$ in Theorem \ref{thmImpos} is reminiscent of the case of order one vanishing in two variables, see \cite[Theorem 5.4]{BKPS}, where the corresponding ideal is generated by 
a pair of elements of the form $z+q(x_1)$ and $x_1^{2L}$, $L\in \mathbb{N}$ with $L\geq 1$.
Before we move on to a discussion of whether this similarity persists in the absence of the positivity condition imposed in Theorem \ref{thmImpos}, we note that while Example \ref{faveex} exhibits the simplest situation $L=1$ it is of interest to know if higher order vanishing of $\Im \phi$
is possible.  Namely, does the global constraint of $p$ having no zeros on $\uhp^d$ force any special behavior
(as happens with $\nabla \phi(0)$)? The next result discussed below shows that any degree $2L \in \mathbb{N}$ can occur in Theorem \ref{thmImpos}. 

Again in three variables, it is convenient to simply use $(x,y,z)$ as variables with 
$z$ still acting as our distinguished variable. 
\begin{prop}\label{prop:sqpowers}
For any $L\in \mathbb{N}$ there exists a polynomial $p(x,y,z) \in \C[x,y,z]$ 
with no zeros in $\uhp^3$, 
its sole zero in $\overline{\mathbb{H}^3}$ at $(0,0,0)$, $\frac{\partial p}{\partial z}(0) \ne 0$, 
and the associated $\phi$ having $\Im \phi$ vanishing to order $2L$ with $\Im \phi_{2L}$ positive definite.
\end{prop}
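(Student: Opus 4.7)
The plan is to reduce the three-variable problem to constructing a suitable one-variable rational Herglotz function and then lift it by symmetrization. Specifically, I would seek $p$ in the factored form
\[
p(x,y,z) = \alpha(x)\alpha(y)\, z + \alpha(y)\beta(x) + \alpha(x)\beta(y) = \alpha(x)\alpha(y)\bigl(z + \psi(x) + \psi(y)\bigr),
\]
where $\alpha,\beta \in \C[u]$, $\alpha$ has no zeros in $\overline{\uhp}$, and $\psi := \beta/\alpha : \uhp \to \overline{\uhp}$ is a Herglotz function with $\psi(0)=0$. The point of this ansatz is that stability on $\uhp^3$ would then be automatic (any hypothetical zero with $\Im z > 0$ would force $z = -\psi(x)-\psi(y)$ and $\Im z \le 0$, a contradiction), and that $\phi(x,y) = \psi(x) + \psi(y)$ decouples in its Taylor expansion, so that $\Im\phi_{2L}(x,y) = (\Im\psi_{2L})(x^{2L}+y^{2L})$ is positive definite on $\R^2$ as soon as $\Im\psi_{2L}>0$.

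The main task is then the one-variable construction. I would take $\alpha(u) = (1-iu)^{2L-1}$, whose unique zero $u=-i$ lies outside $\overline{\uhp}$, and write $(1+iu)^{2L-1} = P_+(u) + iP_-(u)$ with $P_\pm\in\R[u]$. The identity $P_+^2+P_-^2=(1+u^2)^{2L-1}$ shows $\gcd(P_+,P_-)=1$, and a B\'ezout argument (reduce a particular solution modulo $P_+$, then adjust by a constant multiple of $P_+$ to normalize) should produce $A,B\in\R[u]$ with $A(0)=1$, $B(0)=0$, satisfying
\[
A(u)P_-(u) + B(u)P_+(u) = cu^{2L-1}
\]
for a prescribed $c > 0$. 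Setting $Q = A + iB$ and $\beta(u) = uQ(u)$, a direct boundary calculation using $(1-iu)^{2L-1}(1+iu)^{2L-1} = (1+u^2)^{2L-1}$ on $\R$ is expected to yield
\[
\Im\psi(u) = \frac{cu^{2L}}{(1+u^2)^{2L-1}}, \qquad u\in\R,
\]
which is non-negative and vanishes exactly to order $2L$ at $0$ with leading coefficient $c$. To propagate $\Im\psi \ge 0$ from $\R$ to all of $\uhp$, the behavior at infinity needs some care: one has $\psi(u) \sim bu + O(1)$ for some real $b$ (since $\deg\beta$ may equal $\deg\alpha + 1$), and I would choose the free parameter $c$ small enough so that $b \ge 0$. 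Applying the maximum principle to $-\Im\psi - \epsilon \Im u$ for $\epsilon \downarrow 0$ then gives $\Im\psi \ge 0$ on $\uhp$, with strict positivity in the interior by the strong maximum principle.

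With $\psi$ in hand, the remaining verifications for $p$ are essentially mechanical: polynomiality and $p(0,0,0)=0$ are clear; $\partial_z p(0,0,0) = \alpha(0)^2 = 1 \ne 0$; the Weierstrass factorization identifies $\alpha(x)\alpha(y)$ as the unit at the origin. A zero of $p$ in $\overline{\uhp^3}$ forces $\Im\psi(x)=\Im\psi(y)=0$, which together with strict positivity of $\Im\psi$ on $\uhp$ and the explicit boundary formula forces $x,y\in\R$ and then $x=y=0$, so $z=0$. Finally, $\phi$ inherits from $\psi(u) = u + (\text{real terms of orders 2 through }2L-1) + icu^{2L}+\cdots$ the splitting $\phi_j(x,y) = \psi_j(x^j + y^j)$, giving the required expansion structure and positive definiteness of $\Im\phi_{2L}$.

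The main obstacle will be the middle step: constructing the B\'ezout pair $(A,B)$ with the correct normalization, and then certifying that the resulting $\psi$ is globally Herglotz on $\uhp$. The clean boundary formula for $\Im\psi|_\R$ reduces this to a maximum-principle argument, and the only delicate issue is controlling the linear-at-infinity behavior of $\psi$, which is handled by choosing $c$ sufficiently small so that the coefficient of linear growth is non-negative.
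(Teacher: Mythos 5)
Your construction is correct and completable, but it takes a genuinely different route from the paper. You build the example additively: a one-variable rational Herglotz function $\psi$ with $\Im\psi(u)=cu^{2L}/(1+u^2)^{2L-1}$ on $\R$, obtained from a B\'ezout identity $AP_-+BP_+=cu^{2L-1}$ against $(1+iu)^{2L-1}=P_++iP_-$, and then $p(x,y,z)=\alpha(x)\alpha(y)\bigl(z+\psi(x)+\psi(y)\bigr)$, so that $\Im\phi_{2L}(x,y)=c(x^{2L}+y^{2L})$ is positive definite and the sole-zero and stability claims reduce to $\Im\psi>0$ on $\uhp$ and $\Im\psi|_{\R}$ vanishing only at $0$. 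The delicate points you flag do go through: $\gcd(P_+,P_-)=1$ since a common root would force $(1\pm iu)^{2L-1}$ to vanish simultaneously; the B\'ezout identity at $u=0$ gives $B(0)=0$ for free and $A(0)$ is adjustable by adding a constant multiple of $P_+$; writing $(A,B)=(P_++c\tilde A,\,-P_-+c\tilde B)$ with a minimal-degree pair $\tilde A\,P_-+\tilde B\,P_+=u^{2L-1}$ yields $\psi(u)=u+O(1)$ on $\overline{\uhp}$, so the slope at infinity is automatically real (indeed $1-O(c)>0$, and realness also follows from boundedness of $\Im\psi$ on $\R$), and Phragm\'en--Lindel\"of then gives $\Im\psi\ge 0$ on $\uhp$, with strictness by the maximum principle. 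The paper instead iterates the rational Pick function attached to the polynomial of Example \ref{faveex}, identifies the resulting functions with the rational inner functions of Example 8 in \cite{S23}, and transfers the contact-order estimate \eqref{eqn:generalL1} to \eqref{eqn:generalL2} via the disk-to-half-plane change of variables; that argument is shorter on computation but leans on cited results about a specific family of RIFs and on the $\D^3\leftrightarrow\uhp^3$ conversion, and it produces $\Im\phi_{2L}\asymp(x^2+y^2)^L$ rather than an explicit closed form. Your approach buys a self-contained, explicit construction with the positive definiteness visible by inspection (and it generalizes immediately to $d>3$ variables by summing more copies of $\psi$), at the cost of carrying out the one-variable B\'ezout/Phragm\'en--Lindel\"of bookkeeping that you currently only sketch ("should produce", "is expected to yield"); those steps should be written out in a final version, but none of them hides an obstruction.
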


The next natural questions are if it is possible for $\Im \phi_{2L}$ to
fail to be positive definite for an isolated zero, and what happens in this case.

We have a construction for producing a family of examples
with degenerate $\Im \phi_{2L}$.
The construction is easier in the setting of the tridisk
where we start with a polynomial $p_0(x,z)$ with no
zeros in $\D^2$ but a zero at $(1,1)$ and consider
\[
p_1(x,y,z) = p_0((x+y)/2,z).
\]
The upper half-plane version of this simply looks more complicated.
First, let $\beta = i(1-z)/(1+z)$ be a standard conformal map from $\mathbb{D}$ to $\mathbb{H}$ and note that
\[
\beta\left( \frac{\beta^{-1}(x) + \beta^{-1}(y)}{2}\right) = \frac{i(x+y) +2xy}{2i+x+y}.
\]
Then, we have the following. We make
reference to a property of a two-variable stable polynomial
called \emph{contact order} that we define later;
it essentially measures the rate that the polynomial's 
zero set approaches $\R^2$.

\begin{prop} \label{prop:degex} Let $q(x,y) \in \C[x,y]$ have no zeros in $\uhp^2$,
bidegree $(m,n)$, finitely many zeros on $\R^2$, and $q(0,0)=0$.
Assume 
$\frac{\partial q}{\partial  y}(0) \ne 0,$ and that $q$ has 
contact order $K>2$ at $(0,0)$. Set
\[ 
p(x,y,z) =
(2i + x+y)^m q\left( \frac{i(x+y) +2xy}{2i+x+y}, z\right).\]
Then, $p(x,y,z)\ne 0$ for $(x,y,z) \in \mathbb{H}^3$ and $p$ has 
finitely many zeros on $\mathbb{R}^3$.
Furthermore, there is a parametrization of $\mathcal{Z}_p$ near $0$ of the form $z+\phi(x,y)=0$ such that  $\Im \phi_2(x,y) \not \equiv 0$ and $\Im \phi_k(x,y) =0$ when $x=y$ for $k <K.$
\end{prop}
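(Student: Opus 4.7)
I will verify the three claims of the proposition in turn: stability of $p$ on $\uhp^3$, finiteness of real zeros of $p$, and the stated properties of the local parametrization $\phi$. For stability, observe that $S(x,y):=\tfrac{i(x+y)+2xy}{2i+x+y}$ factors as $\beta\circ A\circ (\beta^{-1}\times\beta^{-1})$, where $\beta\colon\D\to\uhp$ is the Cayley transform and $A(u_1,u_2):=(u_1+u_2)/2$ is averaging; convexity of $\D$ yields $S(\uhp^2)\subset\uhp$. Since $\Im(2i+x+y)>2$ on $\uhp^2$, the hypothesized stability of $q$ gives $p\ne 0$ on $\uhp^3$.

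For finiteness on $\R^3$, split according to the diagonal. On $\{x=y\}\cap\R^3$ one has $S(x,x)=x$, so $p(x,x,z)=(2i+2x)^m q(x,z)$ has real zeros in bijection with the real zeros of $q$, finitely many by hypothesis. Off the diagonal, a direct computation yields $\Im S(x,y)=(x-y)^2/((x+y)^2+4)$ for real $x,y$, so $S(\{x\ne y\}\cap\R^2)\subset\uhp$ and any off-diagonal real zero of $p$ gives a zero of $q$ in $\uhp\times\R$. I claim no such zero exists: if $q(u_0,z_0)=0$ with $u_0\in\uhp$ and $z_0\in\R$, parametrize any analytic branch of $\mathcal{Z}_q$ through $(u_0,z_0)$ as $t\mapsto(u(t),z(t))$ on a complex disk. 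If $z$ is nonconstant, the open mapping theorem produces $t$ with $z(t)\in\uhp$; by continuity $u(t)\in\uhp$ for small $t$, so $(u(t),z(t))\in\uhp^2$, contradicting stability of $q$. Hence the branch lies in $\{z=z_0\}$, and by irreducibility $z-z_0$ divides $q$, giving $q$ an entire real line of zeros in $\R^2$---contradicting the finite real zero hypothesis. (The dual case where $z$ is constant but $u$ nonconstant forces $u-u_0\mid q$, impossible since $u_0\in\uhp$.) This no-zero lemma is the principal technical obstacle; the open mapping argument handles smooth points of $\mathcal{Z}_q$ directly and extends to singular points via Puiseux parametrizations of each branch.

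For the local structure, apply Weierstrass preparation to $q$ in its second variable, using $\partial q/\partial y(0)\ne 0$, to obtain $q=V\cdot(y+\psi(x))$ near the origin with $V(0)\ne 0$ and $\psi(0)=0$; then $p$'s zero set near $0$ is $z+\phi(x,y)=0$ with $\phi(x,y)=\psi(S(x,y))$. Brief Taylor expansions yield
\[
S(x,y)=\tfrac{x+y}{2}+\tfrac{i(x-y)^2}{4}+O(|(x,y)|^3),\qquad S(x,x)=x.
\]
By Proposition \ref{phithm} combined with the contact order hypothesis $K>2$, $\psi(u)=\sum_{j\ge 1}c_j u^j$ has $c_j\in\R$ for $j<K$ and $\Im c_K\ne 0$; further $c_1>0$ since $\psi\not\equiv 0$ (else $y\mid q$, giving $q$ infinitely many real zeros). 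Hence
\[
\phi_2(x,y)=c_1\tfrac{i(x-y)^2}{4}+c_2\tfrac{(x+y)^2}{4},\qquad \Im\phi_2(x,y)=\tfrac{c_1(x-y)^2}{4}\not\equiv 0.
\]
Since each $\phi_k$ is homogeneous of degree $k$ and $\phi(x,x)=\psi(x)=\sum_k c_k x^k$ matches term by term, $\phi_k(x,x)=c_k x^k$, whence $\Im\phi_k(x,x)=\Im(c_k)x^k=0$ for $k<K$.
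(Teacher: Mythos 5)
Your proposal is correct and follows essentially the same route as the paper for the core local computation: you use Weierstrass preparation to write $q = V\cdot(y+\psi(x))$, note $\phi = \psi\circ S$, cite Proposition~\ref{phithm} plus the contact-order hypothesis to get $c_j\in\R$ for $j<K$ with $c_1>0$, and Taylor-expand $S$ to read off $\Im\phi_2$. Your expansion $S(x,y) = \tfrac{x+y}{2} + \tfrac{i(x-y)^2}{4} + O(3)$ gives the full homogeneous degree-two part $\Im\phi_2 = \tfrac{c_1}{4}(x-y)^2$, which is slightly more than what the paper records (the paper only computes $\partial^2\phi/\partial x^2$ and observes the $\tfrac{a_1}{4}x^2$ term), but it is the same calculation.

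Where you genuinely add content is in the stability and finiteness assertions, which the paper disposes of with the remark ``by the properties of $\beta$ and $\beta^{-1}$.'' Your diagonal/off-diagonal split, the explicit formula $\Im S(x,y) = (x-y)^2/((x+y)^2+4)$ on $\R^2$, and the no-zero lemma for $q$ on $\uhp\times\R$ (via Puiseux branches and the open mapping theorem, using that a constant-$z$ branch would force $z-z_0\mid q$ and hence infinitely many real zeros) is a complete proof of something the paper leaves implicit, and it is the right argument. One small blemish: the parenthetical ``dual case where $z$ is constant but $u$ nonconstant forces $u-u_0\mid q$'' is garbled --- that is the same case as the one you already handled correctly (branch in $\{z=z_0\}$ gives $z-z_0\mid q$), and the conclusion $u-u_0\mid q$ would only follow if $u$ were constant. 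Since the sentence is superfluous to the argument, it does not affect correctness, but it should be deleted or fixed.
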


There are known constructions that give polynomials $q$ as described in Proposition \ref{prop:degex}. For example, at the end of \cite{pascoe18}, Pascoe constructs a family of rational functions of degree $(n,1)$ with prescribed regularity at a boundary point. As described in Section 3.4 in \cite{BKPS}, this boundary regularity condition actually translates to a statement about contact order and so, Pascoe's construction yields 
stable polynomials of degree $(n,1)$ with prescribed contact order $K$ at $(0,0)$ and finitely many zeros in $\R^2$. Combined with Proposition \ref{prop:degex}, this  gives us a degenerate example for each possible $K.$ 
Another construction of general $K$-contact order polynomials with a prescribed number
of zeros is given by Sola in \cite[Corollary 6]{S23}.

In the general case of degenerate $\Im \phi_{2L}$ but also when $p$ has
an isolated zero on $\R^d$ it is possible to reduce the description of $\mathcal{I}^{\infty}_{p}$
to an ideal described entirely with polynomials.
The first step is showing that if $\Im \phi(x) >0$ near $0\in \R^d$ but $\phi(0)=0$,
then $\Im \phi$ is comparable to a polynomial.

\begin{lemma} \label{lociso}
Suppose $f \in \R\{x_1,\dots, x_d\}$, $f(0)=0$, and $f(x) >0$ for small $x\ne 0$.
Then, there exist a positive integer $K$ and a polynomial 
$g(x_1,\dots, x_d) \in \R[x_1,\dots, x_d]$ such that near $0$
\[
g(x) \asymp f(x) 
\]
and $f(x) \gtrsim |x|^{K}$.
\end{lemma}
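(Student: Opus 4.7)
The plan is to split the proof into two steps: first establish a Lojasiewicz-type lower bound of the form $f(x,y) \gtrsim |(x,y)|^K$ for some positive integer $K$, and then construct $g$ as a Taylor polynomial of $f$ of sufficiently high degree.

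For the lower bound, observe that the hypothesis forces the zero set of $f$ to be the isolated point $\{(0,0)\}$ in a small neighborhood of the origin. Since $f$ is real analytic, the classical Lojasiewicz inequality yields constants $c>0$ and a real exponent $\alpha>0$ with
\[
f(x,y) \geq c\,|(x,y)|^{\alpha}
\]
near $(0,0)$. Taking any positive integer $K \geq \alpha$ then gives $f(x,y) \gtrsim |(x,y)|^{K}$, since $|(x,y)|^{\alpha} \geq |(x,y)|^{K}$ for $|(x,y)|$ small. In two variables this bound can also be derived by hand by expanding $f$ in homogeneous parts $f = \sum_{j \geq N} f_j$: if $f_N$ is positive definite on the unit circle then $f \asymp f_N \asymp |(x,y)|^{N}$ directly, while if $f_N$ vanishes along some real directions one resolves the vanishing via Puiseux expansions of the (necessarily non-real) branches of the complex zero locus, or equivalently by iterated monoidal blow-ups $y = tx$, $x = sy$, and concludes by compactness of the exceptional divisor.

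With $K$ fixed, let $g$ be the Taylor polynomial of $f$ at the origin of degree $K$. Taylor's remainder estimate yields $|f(x,y) - g(x,y)| \leq C\,|(x,y)|^{K+1}$ near the origin, and combined with the lower bound just established this gives
\[
\frac{|f(x,y) - g(x,y)|}{f(x,y)} \;\leq\; \frac{C}{c}\,|(x,y)|,
\]
which is smaller than $1/2$ once $(x,y)$ is sufficiently close to $(0,0)$. Consequently $\tfrac{1}{2} f \leq g \leq \tfrac{3}{2} f$ in such a neighborhood, giving $g \asymp f$ as required. The principal obstacle is the Lojasiewicz step; the Taylor-truncation part is a routine consequence once the bound is available. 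In two variables this obstacle is quite manageable through the elementary Puiseux/blow-up route sketched above, so no deep machinery beyond classical analytic geometry is strictly necessary.
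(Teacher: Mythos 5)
Your proof is correct but proceeds along a genuinely different route from the paper. You split the task into two independent steps: first, a \L ojasiewicz-type lower bound $f(x,y)\gtrsim |(x,y)|^K$, invoked either as a black box (the classical \L ojasiewicz inequality for real analytic functions) or sketched by hand via homogeneous decomposition plus Puiseux/blow-up; second, a Taylor-truncation step where $g$ is simply the degree-$K$ Taylor polynomial of $f$, with $g\asymp f$ following from $|f-g|\lesssim |(x,y)|^{K+1}\lesssim |(x,y)|\cdot f$. The paper does not separate these steps: it applies Weierstrass preparation to factor $f$ into irreducible Weierstrass polynomials, factors each of those by Newton--Puiseux into branches $y-\psi(\mu^n x^{1/r})$, uses the isolated-zero hypothesis to force $\Im\psi\not\equiv 0$ and thereby obtain the lower bound branch by branch, and then constructs the comparable polynomial $g$ \emph{from the same data} by truncating each Puiseux series $\psi$ to a polynomial $\psi^{[N]}$ and re-expanding the resulting symmetric product. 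In other words, the paper's $g$ is not a Taylor polynomial of $f$ but a carefully assembled approximant to the Weierstrass factorization, and the lower bound and comparability are proved simultaneously from the Puiseux data. Your approach is cleaner and more modular once \L ojasiewicz is accepted; it also generalizes immediately to $\R^d$, since neither \L ojasiewicz nor Taylor truncation is tied to two variables, whereas the paper's argument relies on one-variable Weierstrass polynomials and Newton--Puiseux. The paper's approach, on the other hand, is self-contained (no appeal to \L ojasiewicz, which in general dimension rests on resolution of singularities) and produces an explicit, structurally informative $g$. One minor caveat: your hand-derived alternative to \L ojasiewicz (``concludes by compactness of the exceptional divisor'') is only a sketch, so if you wish to avoid citing \L ojasiewicz you would need to fill in that argument; but as written, the citation route is a complete proof.
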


\begin{remark} \label{kollarremark}
We are grateful to J. Koll\'ar for pointing out that
this result follows directly from the Łojasiewicz inequality.  
Our original draft stated and proved the above lemma for $d=2$.
We shall include our original explanation and proof for $d=2$
since it shows how to construct $g$ and $K$.   
\end{remark}

Once we have this lemma in place,
we can prove the following.

\begin{theorem}\label{thm:isoideal}
Let $p(x_1,\dots, x_d,z)\in \C[x_1,\dots, x_d,z]$ have no zeros in $\uhp^{d+1}$, 
$p(0) = 0$, $\frac{\partial p}{\partial z}(0) \ne 0$,
and assume the zero $0$ of $p$ is isolated with respect to $\R^{d+1}$.

Then, there exist $g(x) \in \R[x_1,\dots, x_d]$ and $H(x) \in \R[x_1,\dots, x_d]$
such that the ideal of admissible numerators for $p(x,z)$
is given by
\[
\mathcal{I}^{\infty}_p = ( z+H(x), \mathrm{IC}(g(x)) )
\]
where
\[
\mathrm{IC}(g(x)) = \{ q(x) \in \C\{x_1,\dots, x_d\}:  |q(x)| \lesssim g(x) \text{ near } (0,0)\}.
\]

\end{theorem}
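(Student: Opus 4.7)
The plan is to use Weierstrass preparation to reduce to the model $p \asymp z + \phi(x,y)$, extract a real polynomial $g \in \R[x,y]$ from Lemma \ref{lociso} and a suitable real-coefficient truncation $H \in \R[x,y]$ of $\phi$, and verify both inclusions in $\mathcal{I}^{\infty}_p = (z+H, \mathrm{IC}(g))$ via Weierstrass division together with boundary estimates. Using \eqref{pfac} and Proposition \ref{phithm}, I would write $p = u(x,y,z)(z+\phi(x,y))$ with $u(0,0,0) \ne 0$, so that admissibility of $q$ is equivalent to boundedness of $q/(z+\phi)$ near $0$. The isolated-zero assumption on $\R^3$ combined with the non-negativity of $\Im \phi$ from Proposition \ref{phithm} yields $\Im \phi(x,y) > 0$ for small $(x,y) \in \R^2 \setminus \{(0,0)\}$; applying Lemma \ref{lociso} to $f = \Im \phi$ produces $g \in \R[x,y]$ and an integer $K$ with $g \asymp \Im \phi$ and $\Im \phi \gtrsim |(x,y)|^K$. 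Fixing any $N \geq K$ and setting $H = \sum_{j < N} \mathrm{Re}\, \phi_j \in \R[x,y]$ (using that $\phi_j$ is real for $j < 2L$ by Proposition \ref{phithm}), one computes
\[
\phi - H = i \sum_{2L \leq j < N} \Im \phi_j + \sum_{j \geq N} \phi_j ,
\]
whose modulus on $\R^2$ is bounded by $\Im \phi + O(|(x,y)|^N) \lesssim g$, so $\phi - H \in \mathrm{IC}(g)$.

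For the inclusion $(z+H, \mathrm{IC}(g)) \subseteq \mathcal{I}^{\infty}_p$ the critical estimate is $|h(x,y)| \lesssim |z+\phi(x,y)|$ on $\uhp^3 \cap \D_\epsilon^3$ whenever $h \in \mathrm{IC}(g)$. On the distinguished boundary $\R^3 \setminus \{0\}$ this follows immediately from $|z+\phi| \geq \Im \phi \asymp g \gtrsim |h|$, while near the origin the holomorphic function $h/(z+\phi)$ extends by continuity since $h(0,0) = 0$ and $z+\phi$ has simple vanishing in $z$. A Phragm\'en--Lindel\"of / maximum-modulus argument on $\uhp^3 \cap \D_\epsilon^3$, whose closure meets the zero set of $p$ only at the origin, then propagates the bound into the interior. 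Admissibility of $z+H$ follows from the decomposition $z+H = (z+\phi) - (\phi - H)$ together with $|\phi - H| \lesssim g \lesssim |z+\phi|$.

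For the reverse inclusion, given $q \in \mathcal{I}^{\infty}_p$, Weierstrass division by the degree-one Weierstrass polynomial $z + \phi$ yields $q(x,y,z) = A(x,y,z)(z+\phi(x,y)) + B(x,y)$ with $A \in \C\{x,y,z\}$ and $B \in \C\{x,y\}$. Boundedness of $q/p$ and of $A(z+\phi)/p = A/u$ forces boundedness of $B(x,y)/(z+\phi(x,y))$ on $\uhp^3 \cap \D_\epsilon^3$. Approaching $(x_0,y_0) \in \R^2$ from inside $\uhp^2$ with $z = -\mathrm{Re}\, \phi(x_0,y_0) + i\epsilon$ and sending $\epsilon \to 0^+$ gives $|z+\phi(x_0,y_0)| \to \Im \phi(x_0,y_0)$, hence $|B(x_0, y_0)| \lesssim \Im \phi(x_0,y_0) \asymp g(x_0,y_0)$, i.e.\ $B \in \mathrm{IC}(g)$. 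Decomposing $q = A(z+H) + A(\phi - H) + B$ and noting that $A(\phi - H) \in \mathrm{IC}(g)$ because $A$ is bounded near $0$ and $|\phi - H| \lesssim g$, we conclude $q \in (z+H, \mathrm{IC}(g))$.

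The main obstacle is the boundary-to-interior step: turning the pointwise comparison $|h| \lesssim |z+\phi|$ on the distinguished boundary into a uniform bound throughout $\uhp^3 \cap \D_\epsilon^3$. The difficulty is that $\phi$ becomes genuinely complex when continued off $\R^2$, so $\Im \phi(x,y)$ need not dominate $g(x,y)$ once $(x,y) \in \uhp^2$, and one cannot estimate $|z+\phi|$ from below by a polynomial in the imaginary parts alone. The argument must exploit stability of $p$, which ensures $h/(z+\phi)$ is holomorphic on $\uhp^3 \cap \D_\epsilon^3$, alongside a maximum-principle argument with a careful treatment of the singular point at the origin; this is also where the restriction $d=3$ is expected to enter, through the special geometric information about the zero set afforded by Lemma \ref{lociso}.
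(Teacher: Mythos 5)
Your overall skeleton is right — factor $p$ via Weierstrass preparation, pass to $z+\phi$, extract $g$ from Lemma \ref{lociso} applied to $\Im\phi$, truncate $\Re\phi$ to get $H$, and argue each inclusion. The reverse inclusion $\mathcal{I}^{\infty}_p \subseteq (z+H, \mathrm{IC}(g))$ is essentially correct: Weierstrass division, restriction to $\R^3$, then setting $z=-\Re\phi(x,y)$ to extract $|B|\lesssim\Im\phi\asymp g$ is exactly how Proposition \ref{genprop} in the paper handles that direction.

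The gap is in the forward inclusion, and you have located it yourself but not filled it. The passage claiming $h/(z+\phi)$ ``extends by continuity'' at the origin is not right — boundedness is what is being proved, and the ratio generically has no continuous extension there. More importantly, the proposed Phragm\'en--Lindel\"of step is not carried out and is not what the paper does. The paper instead proves the direct pointwise lower bound (Proposition \ref{Im-est}): writing $x = u+iv \in \overline{\uhp}^d$, one has
\[
\Im(\phi(x)) \gtrsim \Im\phi(u) + |v|,
\]
which crucially uses the hypothesis $\nabla\phi(0)\in(0,\infty)^d$ (guaranteed here by the isolated-zero assumption via Remark \ref{phiremark}). With this, for $h\in\mathrm{IC}(g)$ one writes $h(u+iv)=h(u)+O(|v|)$ and gets $|h(u+iv)|\lesssim \Im\phi(u)+|v|\lesssim \Im(\phi(x))\leq|z+\phi(x)|$ directly, no maximum principle needed. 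This estimate is the content you would need to supply to close the gap. Finally, your guess that the restriction to $d=3$ enters through the boundary-to-interior step is off: Propositions \ref{Im-est} and \ref{genprop} work in any dimension, and the dimension restriction comes entirely from Lemma \ref{lociso}, which does real-algebraic-geometric work (Puiseux factorization) in two base variables $(x,y)$.
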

This transfers the problem of determining
the full ideal of admissible numerators
to the real algebraic geometry problem of determining
$\mathrm{IC}(g(x))$. The notation here stands for {\it integral closure}, see \cite{Kollar} for details and references.

The remainder of our paper is structured as follows. 
In Section \ref{exsection} we give an example of Theorem \ref{thm:isoideal}
in action to exactly determine an admissible numerator ideal
in the absence of the positive definite condition in Proposition \ref{phithm}. 
Section \ref{proofspI} contains proofs of the first four of our stated results. 
In Section \ref{proofspII}, we present the proofs of Lemma \ref{lociso} and Theorem \ref{thm:isoideal}
Along the way, we examine several examples that illustrate the results obtained in this paper.

\section{A more complicated example}\label{exsection}
Before we turn to the proofs of our results, we examine a more involved example. The goal is to get
a feeling for what Theorem \ref{thm:isoideal} says in practice, and to see what 
computations look like when we are faced with a non-definite first imaginary homogeneous term at an isolated zero at $0\in \R^3$.
\begin{example}\label{nondefex}
We start with the polynomial
given by 
\[
p_0(x,y) = x^{2} - x y - 3 \, x - y + 4.
\]
Note that $p_0(x,y) = (2-x-y)( 2- x - \frac{2xy-x-y}{2-x-y})$, so 
we see that $p_0$ has no zeros in $\D^2$.
Then, consider 
\[
p_1(x,y,z) = p_0((x+y)/2,z) = \frac{1}{4} \, x^{2} + \frac{1}{2} \, x y + \frac{1}{4} \, y^{2} - \frac{1}{2} \, x z - \frac{1}{2} \, y z - \frac{3}{2} \, x - \frac{3}{2} \, y - z + 4.
\]
If we convert this to a polynomial with no zeros on $\uhp^3$ but with a zero at $0$ we get
\[
\begin{aligned}
p(x,y,z) &= 2 \, x^{2} y^{2} z + 2 i \, x^{2} y^{2} + 3 i \, x^{2} y z \\
&+ 3 i \, x y^{2} z - \frac{5}{2} \, x^{2} y - \frac{5}{2} \, x y^{2} - \frac{5}{4} \, x^{2} z \\&
- \frac{9}{2} \, x y z - \frac{5}{4} \, y^{2} z - \frac{3}{4} i \, x^{2} - \frac{5}{2} i \, x y \\
&- \frac{3}{4} i \, y^{2} - 2 i \, x z - 2 i \, y z + \frac{1}{2} \, x + \frac{1}{2} \, y + z.
\end{aligned}
\]
We can directly solve for a parametrization of the zero set in the form $z+\phi(x,y)$ 
and see that $\phi$ has the initial power series expansion
\[
\begin{aligned}   
\phi(x,y) &= \frac{1}{2} ( x +  y)\\
&+\frac{i}{4}(x-y)^2 \\
&+\frac{1}{8} ( x^{3} + 7 \, x^{2} y + 7\, x y^{2} + y^{3})\\
&+\frac{1}{16} i \, {\left(9 \, x^{2} - 2 \, x y + 9 \, y^{2}\right)} {\left(x + y\right)}^{2} + \text{ higher order terms.}
\end{aligned}
\]
Here $\Im \phi_2(x,y) = \frac{1}{4}(x-y)^2$ is evidently not positive definite.
This is compensated by the 
term $\Im \phi_4(x,y) = \frac{1}{16} \, {\left(9 \, x^{2} - 2 \, x y + 9 \, y^{2}\right)} {\left(x + y\right)}^{2}$,
which though it is not positive definite either, the sum of the terms will be positive
except at $0$.  

One can show 
\[
\Im \phi(x,y) \asymp (x-y)^2 + (x^2+y^2)(x+y)^2
\]
for $(x,y) \in \R^2$ close to $0$. 
Indeed, this follows from
\[
\Im \phi(x,y) = \frac{1}{4}(x-y)^2 + 
\frac{1}{16} \, {\left(9 \, x^{2} - 2 \, x y + 9 \, y^{2}\right)} {\left(x + y\right)}^{2}
+ O(|(x,y)|^5)
\]
as well as 
\[
\left(9 \, x^{2} - 2 \, x y + 9 \, y^{2}\right) \asymp x^2 +y^2
\]
and
\[
|(x,y)|^4 \lesssim (x-y)^2 + (x^2+y^2)(x+y)^2.
\]

Next, let $g(x,y) = (x-y)^2 + (x^2+y^2)(x+y)^2$.  
\begin{claim}
The ideal 
$\mathrm{IC}(g(x,y))$ of
$q(x,y) \in \C\{x,y\}$
satisfying 
$|q(x,y)| \lesssim g(x,y)$
for $x,y$ small is given by
\[
\mathrm{IC}(g(x,y)) = ((x-y)^2, (x-y)(x+y)^2, (x,y)^4).
\]
\end{claim}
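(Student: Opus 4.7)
The plan is to pass to coordinates $u = x-y$, $v = x+y$, in which both the ideal and the weight become transparent. In these variables $(x,y)^4 = (u,v)^4$, and after absorbing $u^4, u^3v, u^2v^2$ into $(u^2)$ and $uv^3$ into $(uv^2)$, the listed ideal simplifies to $(u^2, uv^2, v^4)$. The weight becomes $g = u^2 + \tfrac{1}{2}(u^2+v^2)v^2 \asymp u^2 + v^4$ near the origin (the cross-term $u^2v^2$ is controlled by $u^2$ for small $v$), so it suffices to describe $\{q \in \C\{u,v\} : |q(u,v)| \lesssim u^2 + v^4\}$.

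For the easy inclusion, I would check each generator directly: $(x-y)^2 = u^2 \leq g$; $|(x-y)(x+y)^2| = |uv^2| \leq \tfrac{1}{2}(u^2+v^4) \lesssim g$ by AM-GM; and for $|\alpha|+|\beta|=4$, $|x^\alpha y^\beta| \leq |(x,y)|^4 \lesssim g$ using the comparison already noted in the excerpt.

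For the reverse inclusion, I would Taylor expand $q(u,v) = \sum a_{ij} u^i v^j$ and list the monomials that fail to lie in $(u^2, uv^2, v^4)$: these are exactly $\{1, v, v^2, v^3, u, uv\}$, corresponding to the six coefficients $a_{00}, a_{01}, a_{02}, a_{03}, a_{10}, a_{11}$. The strategy is to force each of them to vanish by restricting $q$ to carefully chosen curves. Setting $v = 0$ gives $|q(u,0)| \lesssim u^2$ and hence $a_{00} = a_{10} = 0$; setting $u = 0$ gives $|q(0,v)| \lesssim v^4$ and hence $a_{00} = a_{01} = a_{02} = a_{03} = 0$; and substituting the parabola $u = v^2$ yields $|q(v^2,v)| \lesssim v^4$, whose coefficient of $v^3$ is $a_{03} + a_{11} = a_{11}$, forcing $a_{11} = 0$.

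Once all six forbidden coefficients vanish, every surviving Taylor monomial of $q$ is divisible by one of $u^2, uv^2, v^4$. Collecting terms into three groups and verifying that the resulting quotient series inherit the geometric decay of the coefficients of $q$ exhibits a decomposition $q = f_1 u^2 + f_2 uv^2 + f_3 v^4$ with $f_1, f_2, f_3 \in \C\{u,v\}$, completing the proof. The only step requiring any thought is the choice of probing curve $u = v^2$ used to isolate $a_{11}$: the exponent must be selected so that the $uv$ contribution $v^3$ has strictly lower order than both the weight bound $v^4$ along the curve and the contributions coming from coefficients already killed by the axis slices. This is the main (and essentially only) obstacle in the argument.
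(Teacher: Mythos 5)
Your argument is correct and follows essentially the same route as the paper's proof: change to $u=x-y$, $v=x+y$ coordinates, identify the candidate ideal as $(u^2,uv^2,v^4)$, verify the inclusion of generators using the AM--GM estimate $|u|v^2 \lesssim u^2+v^4$, and then kill the six ``forbidden'' low-order coefficients by restricting to the axes and to the parabola $u=v^2$. The paper phrases the last elimination slightly differently (it shows $uv\notin \mathrm{IC}(G)$ once and invokes that fact after reducing modulo $I_0$), while you compute the $v^3$ coefficient of $q(v^2,v)$ directly, but the two versions are the same calculation.
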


\begin{proof}
We shall change variables to  $u=x-y, v=x+y$ and consider $G(u,v) = u^2+(u^2+v^2)v^2 = u^2 + u^2 v^2+v^4$. 
The ideal of polynomials satisfying $|q(u,v)| \lesssim G(u,v)$ evidently contains 
\[
u^2, u^2v^2, uv^3, v^4.
\]
It also includes $uv^2$ because 
\[
|u|v^2 \leq \frac{1}{2}(u^2 + v^4).
\]
The ideal contains all fourth degree monomials.  The only third degree monomial
that it does not contain is $v^3$.  The only second degree monomial it does contain is $u^2$.  Indeed,
we cannot have
\[
|uv| \lesssim (u^2+u^2v^2+v^4)
\]
because if we set $u=v^2$ we get $v^3\lesssim v^4$.  Also, $v^2$ is ruled out by setting $u=0$.

The ideal therefore contains
\[
I_0 = (u^2, u^3, u^2v, uv^2, (u,v)^4).
\]
(Elements $u^3,u^2v$ are written for emphasis.)
To prove this is all, we take a polynomial $q(u,v)$ bounded above by $u^2+u^2v^2+v^4$
and reduce modulo $I_0$.  We can write 
\[
q(u,v) = a + bu +cv + d uv + ev^2 + g v^3.
\]
We assume $|q(u,v)| \lesssim (u^2+u^2v^2+v^4)$ for $u,v$ small.
Evidently, $a=0$.  Setting $v=0$ yields $b=0$. Setting $u=0$ yields $c=0$ then $e=g=0$.  
Finally, $uv$ is not in the ideal so $d=0$. 
\end{proof}
Now we can apply Theorem \ref{thm:isoideal} to write down the ideal of admissible numerators of $p$. 
Using the notation of Lemma \ref{lociso}, we have $K=4$ for $\Im \phi$. 
Then the proof of Theorem \ref{thm:isoideal} implies that $H$ is the 
third-order Taylor polynomial of $\Re \phi$. Specifically,
\[
H(x,y) = \frac{1}{2} ( x +  y)  
+\frac{1}{8} ( x^{3} + 7 \, x^{2} y + 7\, x y^{2} + y^{3}).
\]
Theorem \ref{thm:isoideal} immediately implies that the ideal  is
\[
\mathcal{I}^{\infty}_{p} = 
(z+H(x,y), (x-y)^2, (x-y)(x+y)^2, (x,y)^4). \quad \diamond
\]
\end{example}

\section{Proofs, part I: general dimensions and example constructions}\label{proofspI}

The following is a straightforward exercise using a local
description of analytic functions in one variable so
we omit the proof.

\begin{lemma} \label{onevarlemma}
Suppose $f(x) \in \C\{x\}$ maps elements of $\uhp$ to $\overline{\uhp}$
and $f(0)=0$. If $f\not\equiv 0$ then $f'(0) > 0$.
\end{lemma}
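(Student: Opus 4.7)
The plan is to analyze the leading-order behavior of $f$ at the origin along angular rays in $\uhp$. Since $f\not\equiv 0$ and $f(0)=0$, the power-series expansion takes the form
\[
f(x) = c x^n + O(x^{n+1}), \qquad c = |c|e^{i\alpha}\neq 0, \quad n\geq 1.
\]
Substituting $x = re^{i\theta}$ with $r>0$ small and $\theta\in(0,\pi)$, I get
\[
\Im f(re^{i\theta}) = |c|r^n \sin(n\theta+\alpha) + O(r^{n+1}),
\]
with the error term uniform in $\theta$ on $[0,\pi]$. The hypothesis $f(\uhp)\subset \overline{\uhp}$ forces the left-hand side to be $\geq 0$ for all such $r,\theta$.

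The first step is to conclude that $\sin(n\theta+\alpha) \geq 0$ for every $\theta\in(0,\pi)$. Indeed, if $\sin(n\theta_0+\alpha)<0$ at some $\theta_0$, then for all sufficiently small $r$ the leading term dominates the error and $\Im f(re^{i\theta_0})<0$, contradicting the hypothesis. Next, as $\theta$ ranges over $(0,\pi)$, the quantity $n\theta+\alpha$ traces out an open interval of length $n\pi$; for $\sin$ to stay nonnegative on such an interval, that length must be at most $\pi$, which forces $n=1$. With $n=1$ in hand, the interval $(\alpha,\pi+\alpha)$ must sit inside $[2k\pi,(2k+1)\pi]$ for some integer $k$, which in turn forces $\alpha\equiv 0\pmod{2\pi}$. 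Therefore $c=|c|>0$, and $f'(0)=c>0$, as claimed.

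There is no real obstacle here; the only small care required is ensuring the uniformity of the Taylor remainder as $\theta$ varies, which is immediate since the remainder is bounded by $C r^{n+1}$ independently of $\theta$. (One could equivalently postcompose with the Cayley transform and invoke the disk-version Schwarz lemma, but the direct Taylor argument above seems the most transparent route and matches the paper's remark that this is a straightforward exercise.)
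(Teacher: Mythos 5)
Your proof is correct, and since the paper omits the proof entirely (declaring it a ``straightforward exercise using a local description of analytic functions in one variable''), your argument via the leading Taylor coefficient $c x^n$ and the sign analysis of $\sin(n\theta+\alpha)$ on $(0,\pi)$ is precisely the local-description route the authors have in mind. Nothing further to compare.
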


\begin{proof}[Proof of Proposition \ref{phithm}]
We may assume $\phi \not\equiv 0$.
Locally, $\phi$ maps $\uhp^d$ to $\overline{\uhp}$;
so evidently $\Im \phi \geq 0$ on $\R^d$.
For any $v \in [0,\infty)^d\setminus \{0\}$, $t \mapsto \phi(tv)$ maps $\uhp$ to $\overline{\uhp}$ (locally).
By Lemma \ref{onevarlemma}, either this map is identically zero or 
$\nabla \phi(0) \cdot v >0$.   If $\nabla \phi(0) = 0$, then we would have $\phi$ equal
to zero on an open subset of $\R^d$ implying $\phi \equiv 0$.
Thus, $\nabla \phi(0) \in [0,\infty)^{d} \setminus \{0\}$.

Suppose $\phi$ does not have all real coefficients.
Then, we may write a homogeneous expansion where $\phi_M$ is
the first term with complex and non-real coefficients
\[
\phi(x) = \phi_1(x) + \cdots +\phi_{M-1}(x) + \phi_{M}(x) + \cdots.
\]
Since $\Im \phi(x) \geq 0$ for small $x \in \R^d$,
 \[
 \lim_{t\searrow 0} \Im \phi(tx)/t^M = \Im \phi_M(x) \geq 0.
 \]
We necessarily have that $M$ is even for otherwise $\Im \phi_M(-x) = -\Im \phi_M(x) \geq 0$
would imply $\Im \phi_M$ is identically zero.  

The final claim follows from restricting $\phi$ to the components corresponding to zero components of
$\nabla \phi(0)$ and repeating the argument just given.
\end{proof}


\begin{remark} \label{phiremark}
Assuming the setup of Proposition \ref{phithm} with $\phi \notin \R\{x_1,\dots, x_d\}$, 
$z+\phi(x)$ has an isolated zero at $0$ with respect to $\R^{d+1}$
if and only if 
$\Im \phi(x)$ has an isolated zero at $0$ with respect to $\R^{d}$.
Indeed $z + \phi(x) = 0$ if and only if $z = -\Re \phi(x)$ and $\Im \phi(x) = 0$.

Note that if $\nabla \phi(0)$ has a zero component, 
say the first, then $\phi(x_1,0,\dots, 0)\equiv 0$ and 
$\Im \phi (x)$ does not have an isolated zero.
So we obtain the conclusion that
if $\Im \phi(x)$ has an isolated zero then $\nabla \phi(0)$
has all positive entries; i.e. the imaginary part is
putting constraints on the real part of $\phi$.

Finally, note that if 
$\Im \phi_{2L} (x)$ is positive definite then
$\Im \phi(x) = \Im \phi_{2L}(x) + O(|x|^{2L+1}) \asymp |x|^{2L}$
has an isolated zero
which by the above implies $\nabla \phi(0)$ has all positive
entries. $\quad \diamond$
\end{remark}

The condition that none of the components of $\nabla \phi(0)$ 
 vanishes gives us local control over $\Im(\phi(x))$ in $\uhp^d$.
 
\begin{prop} \label{Im-est}
   Suppose $\phi(x) \in \C\{x_1,\dots, x_d\}$, $\phi(0) = 0$, 
and $\Im \phi(x) \geq 0$ for $x \in \R^d$.    
Assume $\nabla \phi(0) \in (0,\infty)^{d}$.
 
    Then, for $x = u+iv \in \overline{\uhp}^d$ close to $0$ we have
    \[
    \Im (\phi(x)) \gtrsim ((\Im \phi (u)) + |v|) \gtrsim (|(\Im \phi)(x)| + |v|),
    \]
    where $(\Im \phi)(x) = \frac{1}{2i} (\phi(x) - \overline{\phi(\bar{x})})$
    while $\Im (\phi(x))$ is the pointwise imaginary part of $\phi(x)$.
\end{prop}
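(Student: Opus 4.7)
The plan is to decompose $\phi = L + \psi$, where $L(x) = \sum_{j=1}^{d} a_j x_j$ is the linear part of $\phi$ with $a_j = \partial_j \phi(0)$, and $\psi$ is the remainder, which vanishes to order at least two at $0$. The hypothesis $\nabla \phi(0) \in (0,\infty)^d$ says each $a_j$ is real and strictly positive, so $L$ has real coefficients. Consequently the analytic function $\Im \phi$ defined coefficient-wise coincides with $\Im \psi$, also vanishes to order at least two at $0$, and for real $u$ satisfies $(\Im \phi)(u) = \Im(\phi(u)) \geq 0$ by hypothesis.

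For the first inequality, fix $x = u + iv \in \overline{\uhp}^d$ close to $0$, so that $v \in [0,\infty)^d$. Since $L$ has real coefficients, $\Im(L(u+iv)) = \sum_j a_j v_j \geq c|v|$ with $c = \min_j a_j > 0$, and therefore
\[
\Im(\phi(u+iv)) = \sum_j a_j v_j + \Im(\psi(u+iv)) \geq c|v| + \Im(\psi(u)) - |\psi(u+iv) - \psi(u)|.
\]
Since $\nabla \psi(0) = 0$, we have $|\nabla \psi(w)| \lesssim |w|$ near $0$, and integrating along the segment from $u$ to $u+iv$ yields
\[
|\psi(u+iv) - \psi(u)| \leq |v|\max_{t\in[0,1]}|\nabla \psi(u+itv)| \lesssim |v| \cdot |x|.
\]
For $|x|$ sufficiently small this error is at most $\tfrac{c}{2}|v|$, so
\[
\Im(\phi(u+iv)) \geq \tfrac{c}{2}|v| + \Im(\psi(u)) = \tfrac{c}{2}|v| + (\Im \phi)(u),
\]
and since both summands on the right are non-negative we obtain $\Im(\phi(x)) \gtrsim (\Im \phi)(u) + |v|$.

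For the second inequality, the same mean-value argument applied to the analytic function $\Im \phi$ (which vanishes to order at least two) gives
\[
|(\Im \phi)(x) - (\Im \phi)(u)| \lesssim |v| \cdot |x| \lesssim |v|
\]
near $0$. Hence $|(\Im \phi)(x)| \leq (\Im \phi)(u) + C|v|$, and so
\[
|(\Im \phi)(x)| + |v| \leq (\Im \phi)(u) + (C+1)|v| \lesssim (\Im \phi)(u) + |v|,
\]
as required.

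The argument is mostly bookkeeping; the only conceptual subtlety is to distinguish the pointwise imaginary part $\Im(\phi(x))$ from the analytic function $(\Im \phi)(x)$. The strict positivity of $\nabla \phi(0)$ is the crucial ingredient: it ensures that the former picks up a genuinely linear-in-$|v|$ contribution from $L$, which dominates the quadratic-in-$|x|$ perturbation coming from $\psi$ and from $\Im \phi$ itself.
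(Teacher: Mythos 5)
Your proof is correct and follows essentially the same approach as the paper: isolate the linear part (whose imaginary part at $u+iv$ gives $\nabla\phi(0)\cdot v \gtrsim |v|$), and use a mean-value estimate on the remainder, which vanishes to order two, to show the error is $o(|v|)$. The only cosmetic difference is that you decompose $\phi = L + \psi$ and apply the integration-along-a-segment bound directly to the complex-coefficient $\psi$, whereas the paper writes $\phi = A + iB$ with $A, B$ real-coefficient and applies the same bound to $\tilde A := A - L$ and to $B$ separately.
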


\begin{proof}
Let us write $\phi(x) = A(x) + i B(x)$ where $A = \Re\phi$ and $B = \Im\phi$.

We claim first that
\[
\Im(A(u+iv)) = \nabla \phi(0) \cdot v + o(v)
\]
\[
B(u+iv) = B(u) + o(v),
\]
where little-o is taken as $(u+iv) \to 0$.

To show this note that if some
$f(x) \in \R\{x_1,\dots, x_d\}$ vanishes to order $2$ at $0$
then we have for small 
$u\in \R^d$, $v \in [0,\infty)^{d}$,
\[
f(u+iv) = f(u) + o(v).
\]
This follows from 
\[
f(u+iv) - f(u)= \int_{0}^{1} \nabla f(u+itv)\cdot iv dt
\]
using $|\nabla f(u+iv)| = O(|u+iv|)$.

Now consider
$\tilde{A}(u+iv) := A(u+iv) - \nabla \phi(0)\cdot(u+iv)$,
which vanishes to order $2$ and has real coefficients.
By the above
\[
\tilde{A}(u+iv) = \tilde{A}(u) + o(v) 
\]
and taking imaginary parts we have
\[
\Im (A(u+iv)) - \nabla \phi(0)\cdot v = o(v)
\]
since $\nabla \phi(0) \in \R^d$ and $\tilde{A}(u) \in \R$.
For the other estimate, note that $B$ vanishes to
order 2 already so the claim $B(u+iv) = B(u) + o(v)$
is immediate.

Therefore, since $\nabla \phi(0)$ has all positive entries and $v\in [0,\infty)^d$
\[
\Im (\phi(x)) \geq \nabla \phi(0)\cdot v + B(u) + o(v) \gtrsim |v| + B(u)
\gtrsim |v|  + |B(u+iv)|.
\]
\end{proof}

\begin{prop} \label{genprop}
Assume $\phi(x) \in \C\{x_1,\dots, x_d\}$, $\phi(0) = 0$, $\nabla \phi(0) \in (0,\infty)^{d}$, 
and $\Im \phi(x) \geq 0$ for $x \in \R^d$. 
Then, for $q(x,z) \in \C\{x_1,\dots, x_d, z\}$,
\[
\frac{q(x,z)}{z + \phi(x)}
\]
is bounded on $\overline{\uhp}^{d+1} \cap \D_{\epsilon}^{d+1}$
for some $\epsilon>0$
if and only if $q$ belongs to the ideal
\[
(z + (\Re \phi)(x), \mathrm{IC}((\Im \phi)(x)) ),
\]
    where
\[
\mathrm{IC}((\Im \phi)(x)) = \{ f(x) \in \C\{x_1,\dots, x_d\}: |f(x)| \lesssim (\Im \phi)(x)
\text{ for } x \in \R^d \text{ near } 0\}.
\]
In particular, 
$q(x,z)$ belongs to
$\mathcal{I}_p^{\infty}$ if and only if $q(x,-\Re \phi(x)) \in IC(B(x))$.
\end{prop}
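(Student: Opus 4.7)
The plan is to establish the two inclusions separately, both driven by the Euclidean lower bound on the denominator furnished by Proposition~\ref{Im-est}. For $x = u+iv \in \overline{\uhp}^d$ and $z = s+it \in \overline{\uhp}$ near $0$, one has
$$|z+\phi(x)| \geq \Im(z+\phi(x)) = t + \Im(\phi(u+iv)) \gtrsim t + |v| + (\Im\phi)(u).$$
Because $\Im\phi$ vanishes to order at least two at $0$ (its linear part is $\nabla\phi(0)\cdot x$, which is real), an elementary mean-value argument gives $|(\Im\phi)(u+iv)-(\Im\phi)(u)| \lesssim |v|$, and the analogous estimate $|f(u+iv)-f(u)|\lesssim|v|$ holds for any $f \in \C\{x_1,\dots,x_d\}$ with $f(0)=0$. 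These bounds allow one to transfer real-axis control to all of $\overline{\uhp}^{d+1}$.

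For the sufficiency direction it suffices to show that $z+\Re\phi(x)$ and every $f \in \mathrm{IC}((\Im\phi)(x))$ lie in $\mathcal{I}^\infty_{z+\phi}$. The identity
$$\frac{z+\Re\phi(x)}{z+\phi(x)} = 1 - \frac{i(\Im\phi)(x)}{z+\phi(x)}$$
combined with the display above yields a uniform bound for the first quotient. For the second, the definition of $\mathrm{IC}$ forces $f(0)=0$ and $|f(u)| \lesssim (\Im\phi)(u)$ for $u \in \R^d$, so
$$|f(x)| \leq |f(u)| + C|v| \lesssim (\Im\phi)(u) + |v| \lesssim |z+\phi(x)|,$$
and $f/(z+\phi)$ is bounded.

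For the necessity direction, observe that $z+\phi(x)$ is a Weierstrass polynomial of degree one in $z$, so the Weierstrass division theorem produces
$$q(x,z) = A(x,z)(z+\phi(x)) + R(x), \qquad R(x) := q(x,-\phi(x)) \in \C\{x_1,\dots,x_d\}.$$
Writing $z+\phi = (z+\Re\phi)+i(\Im\phi)$, Taylor-expanding $A(x,z) = A(x,-\Re\phi(x)) + (z+\Re\phi(x))B(x,z)$, and collecting terms yields
$$q(x,z) = \bigl[A(x,z) + i(\Im\phi)(x)B(x,z)\bigr](z+\Re\phi(x)) + iA(x,-\Re\phi(x))(\Im\phi)(x) + R(x).$$
The first summand is a multiple of $z+\Re\phi$ and the middle term is visibly in $\mathrm{IC}((\Im\phi)(x))$, so the proposition reduces to $R \in \mathrm{IC}((\Im\phi)(x))$. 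Specializing the hypothesis to the real point $(x,-\Re\phi(x)) \in \R^{d+1} \subset \overline{\uhp}^{d+1}$, where $z+\phi(x) = i(\Im\phi)(x)$, gives $|q(x,-\Re\phi(x))| \lesssim (\Im\phi)(x)$ for $x \in \R^d$ near $0$; writing $q(x,-\phi(x)) - q(x,-\Re\phi(x))$ as $-i(\Im\phi)(x)$ times an analytic factor then upgrades this to $|R(x)| \lesssim (\Im\phi)(x)$, as required.

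The main obstacle is the algebraic bookkeeping in the necessity direction: one must trade the clean decomposition modulo $(z+\phi)$ for one modulo $(z+\Re\phi,\mathrm{IC}((\Im\phi)(x)))$ without losing control of the remainder, and then pin down the only surviving scalar piece using values on $\R^{d+1}$. The concluding assertion that boundedness on $\R^{d+1}$ alone suffices is then immediate: the necessity argument only evaluates $q/(z+\phi)$ at the real test points $(x,-\Re\phi(x))$, and the sufficiency half shows that the resulting ideal membership already forces boundedness on all of $\overline{\uhp}^{d+1}\cap\D_\epsilon^{d+1}$.
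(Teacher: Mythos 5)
Your proof is correct and follows the paper's overall strategy: the crucial denominator lower bound comes from Proposition~\ref{Im-est}, and the necessity direction hinges on a Weierstrass-type division to isolate a remainder depending only on $x$. The sufficiency half matches the paper essentially verbatim. The one place you diverge is in the choice of divisor for the necessity step: the paper divides directly by the \emph{real-coefficient} Weierstrass polynomial $z + (\Re\phi)(x)$, obtaining $q(x,z) = q_0(x) + (z + \Re\phi(x))\,q_1(x,z)$ with $q_0(x) = q(x, -\Re\phi(x))$; since $(x,-\Re\phi(x))\in\R^{d+1}$ and $z+\phi(x)$ evaluates there to $i(\Im\phi)(x)$, the bound $|q_0(x)|\lesssim(\Im\phi)(x)$ drops out in one line. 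You instead divide by $z+\phi(x)$ itself, producing the remainder $R(x)=q(x,-\phi(x))$ (complex-valued on $\R^d$ since $\phi$ has complex coefficients), and then need a further Taylor expansion of $A(x,z)$ in $z$ around $-\Re\phi(x)$ plus a Hadamard-lemma comparison of $R(x)$ with $q(x,-\Re\phi(x))$ to land in the ideal $(z+\Re\phi,\,\mathrm{IC}(\Im\phi))$. Both routes are valid; the paper's choice of divisor aligns the remainder with the ideal's generator from the outset, eliminating the extra bookkeeping your version requires, while your version makes explicit how division by the ``natural'' Weierstrass polynomial $z+\phi$ still funnels into the same answer. Either way, the final observation---that membership reduces to a real-axis inequality at the test points $(x,-\Re\phi(x))$---is drawn correctly.
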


\begin{proof}
Again we write $\phi(x) = A(x) + i B(x)$.
By Proposition \ref{Im-est}, for $x = u+iv \in \uhp^{d}$
\[
|z+ \phi(x)| \geq \Im(z) + \Im (\phi(x)) \gtrsim \Im(z) + |B(x)| + |v|
\]
so we see that
\[
\frac{B(x)}{z+ \phi(x)}
\]
is locally bounded in $\uhp^{d+1}$. This implies
\[
\frac{z+ A(x)}{z+\phi(x)}
\]
has the same property.

Next we show that if $|f(u)| \lesssim B(u)$ for $u \in \R^d$ near $0$, 
then
$\frac{f}{z+\phi}$ is locally bounded in $\uhp^{d+1}$.
We have $f(u+iv) = f(u) + O(v)$
so that $|f(u+iv)| \lesssim B(u)  + |v| \lesssim \Im(\phi(x))$
by Proposition \ref{Im-est}. 
This implies $\frac{f}{z+\phi}$ is locally bounded in $\uhp^{d+1}$.
Therefore, 
everything in the ideal
\[
(z + A(x), \mathrm{IC}(B(x)) )
\]
yields a numerator for a locally bounded function.

Conversely, suppose $q \in \C\{x_1,\dots,x_d,z\}$
and $\frac{q}{z+\phi}$ is locally
bounded in $\uhp^{d+1}$.
We can write 
\[
q(x,z) = q_0(x) + (z+ A(x)) q_1(x,z)
\]
for $q_0(x) \in \C\{x_1,\dots, x_d\}$, $q_1(x,z) \in \C\{x_1,\dots, x_d,z\}$.
As $z+A(x)$ already belongs to the ideal, we need only show $q_0(x)$ belongs
to the ideal in question.
We have $|q(x,z)| \lesssim |z+\phi(x)|$ by assumption locally in $\uhp^{d+1}$
so that by continuity this extends to $\R^{d+1}$ near $0$.
Setting $z = -A(x)$ we have $|q_0(x)| \lesssim |B(x)|$ for small $x\in \R^d$
which means $q_0(x) \in \mathrm{IC}(B(x))$.

This last argument proves that a given $q(x,z)$ belongs to
$\mathcal{I}_p^{\infty}$ if and only if $q(x,-A(x)) \in IC(B(x))$.
\end{proof}

While our focus is on an isolated singularity at $0 \in \R^d$, Proposition \ref{genprop} does allow us to calculate the ideal of admissible numerators
in specific examples with non-isolated singularities.

\begin{example}\label{nonisoex}
Consider $2-xy-z$ which is non-vanishing on $\D^3$; this converts to the $\uhp^3$-stable polynomial 
\[
p(x,y,z) = x+y+z - 2i (xz + yz) - xyz.
\]
Note $p(x,-x,0) \equiv 0$.
The zero set through $(0,0,0)$ is parametrized by
$z + \phi(x,y) = 0$ for 
\[
\phi(x,y) = \frac{x+y}{1-2i(x+y) - xy}.
\]
Note that
\[
\Im \phi(x,y) = \frac{2(x+y)^2}{(1-xy)^2 + 4(x+y)^2}
\]
\[
\Re \phi(x,y) = \frac{(1-xy)(x+y)}{(1-xy)^2 + 4(x+y)^2}.
\]
Since $\Im \phi(x,y) \asymp (x+y)^2$ we can reduce 
$\Re \phi(x,y)$ mod $((x+y)^2)$
to obtain 
\[
\frac{x+y}{1-xy}.
\]
We conclude that the ideal of admissible numerators
is given by
\[
\left(z + \frac{x+y}{1-xy}, (x+y)^2\right) = (x+y+z - xyz, (x+y)^2)
\]
since $(1-xy)$ is a unit. Note that the minimal number of generators here 
is similar to the two varable case and is 
smaller compared to the ideals associated with $p$ having 
an isolated zero at $0$.$\quad \diamond$
\end{example}

\begin{proof}[Proof of Theorem \ref{thmImpos}]
By Remark \ref{phiremark}, $\nabla \phi(0)$ has all
positive entries when $\Im \phi_{2L}$ is positive definite.
Theorem \ref{thmImpos} now follows from Proposition \ref{genprop}
because $\Im \phi (x) \asymp |x|^{2L}$ on $\R^d$ and we can
reduce $\Re \phi(x)$ mod $(x)^{2L}$ to reduce to just the
Taylor polynomial of $\Re \phi(x)$ of degree less than $2L$.
\end{proof}

\begin{proof}[Proof of Proposition \ref{prop:sqpowers}]
Example \ref{faveex} shows that $L=1$ is possible. 

Now let $L\geq 2$ be an integer. Let $p$ be from Example \ref{faveex}.
Define the reflection polynomial 
\[\bar{p}(x_1, x_2, z)=x_1+x_2+z+2i(x_1x_2+x_1z+x_2z)-3x_1x_2z,\]
and set \[g(x_1,x_2,z)=i\frac{p(x_1,x_2,z)+\bar{p}(x_1,x_2,z)}{p(x_1,x_2,z)-\bar{p}(x_1,x_2,z)}.\] Then $g$ is analytic in $\mathbb{H}^3$, maps $\mathbb{H}^3$ to $\mathbb{H}$, and has $g(x_1,x_2,z)\in \mathbb{R}$ for $(x_1, x_2, z)\in \mathbb{R}^3$. In other words, $g$ is a real rational Pick function \cite[Section 6]{BPS2}.

For $x=(x_1,x_2)\in \overline{\mathbb{H}^2}$ fixed, set 
\[g_{x}(z)=g(x_1,x_2, z), \quad z\in \mathbb{H}.\]
For each $\xi\in \mathbb{R}^2$, the function $g_{\xi}(z)$ is either constant, or a rational self-map of the upper half-plane to itself, hence a M\"obius transformation. Letting $g^1=g$, we now set $g^2(x, z)=g(x, g_{x}(z))$, and similarly define $g^L=g^{L-1}(x, g_{x}(z))\colon \mathbb{H}^3\to \mathbb{H}$ for $L=3,4,\ldots$. Then each $g^L$ has the property that $z\mapsto g^L(\xi, z)$ maps $\mathbb{H}$ to itself for $\xi \in \mathbb{R}^2$, or else is constant.

Consider the function $\beta(w) =\frac{1+iw}{1-iw}$, which conformally maps $\mathbb{H}$ to $\mathbb{D}$ with $\beta(0)=1$. Let $f^L = \beta \circ g^L$ and $\phi^L  = \beta \circ g^L \circ \beta^{-1}$, where $\beta^{-1}$ is applied to each input of $g^L$ separately. One can easily check that each $\phi^L$ equals the rational inner function $\phi^N_d$ studied in Example 8 in \cite{S23} for $N=L$ and $d=3.$ Now the conclusions of that example can be applied directly. Specifically, one can write $\phi^L = \frac{q_L}{r_L}$, where the  polynomials $q_L, r_L$ have no common terms, vanish at $(1,1,1)$, and have degree $1$ in the third variable. The conclusions in Example 8 in \cite{S23} immediately imply that for $\theta_1, \theta_2 \in \mathbb{R}$ near $0$, 
\begin{equation} \label{eqn:generalL1} 
\left \{ 1-|z_3| : q_L(e^{i\theta_1}, e^{i\theta_2},z_3)=0 \right\} \asymp\left( \theta_1^2 + \theta_2^2\right) ^L. 
\end{equation}
To translate this to $f^L$, note that $f^L = \phi^L \circ \beta$. Then the discussion in \cite[Section 2.1]{BKPS} about changing domains from $\mathbb{D}^d$ to $\mathbb{H}^d$ implies that $f_L = q \frac{\bar{p}_L}{p_L},$ where $q$ is a two-variable unit near $(0,0)$ and $p_L$ is a stable polynomial in $\mathbb{H}^3$ with degree $1$ in $z$. The properties of $\beta$ allow one to translate \eqref{eqn:generalL1} to the following statement about $p_L$ for $x_1, x_2 \in \mathbb{R} $ near $0$:
\begin{equation} \label{eqn:generalL2} 
\left \{ |\Im(z)| \colon p_L(x_1,x_2, z)=0\right \}
=\left \{ \Im(z) \colon \bar{p}_L(x_1,x_2, z)=0 \right \}  \asymp \left( x_1^2 + x_2^2\right )^L.
\end{equation} 
(While we omit the change-of-variables computation from \eqref{eqn:generalL1}  to \eqref{eqn:generalL2} here, the interested reader could see \cite{BPS1}; the details of a very similar  conversion are given in Step 2 of the proof of Theorem 3.3.) Then \eqref{eqn:generalL2}  implies that $p_L$ has an 
associated $\phi_{2L}\in \mathbb{C}[x_1,x_2]$ with $(\Im\phi_{2L})(x_1,x_2)\asymp \left( x_1^2 + x_2^2\right )^L,$ which completes the proof. 
\end{proof}

\begin{example}
Applying the construction from the proof of Proposition \ref{prop:sqpowers} with $L=2$ to the polynomial in Example \ref{faveex} produces 
\begin{multline}p_2(x,y,z)=x +y+ 2 i((x+y)^2-2x^2y^2) -2 (x^2 y +x y^2)  \\
+ (1 + 2 i (x+y-2x^2y-2xy^2) - 2 (x  + y)^2) z.
    \end{multline}
 For $p_2$, we compute that
\[
\phi(x_1,x_2)=x+y+2(x^3+2x^2y+2xy^2+y^3)+4i(x^4+2x^3y+3x^2y^2+2xy^3+y^4)+\textrm{higher order}.
\] 
For this example, one can actually check directly (viz. \cite[p.1155]{BPS3d}) that  $\Im \phi_4(x,y)\asymp (x^2+y^2)^2$, as guaranteed by Proposition \ref{prop:sqpowers}. Setting
\[H(x,y)=x+y+2(x^3+2x^2y+2xy^2+y^3),\]
we thus have
\[
\mathcal{I}^{\infty}_{p_2}=(z+H(x,y), (x,y)^4). \qquad\diamond
\]
\end{example}

Now we consider Proposition \ref{prop:degex}. 
In what follows, $K$ is the contact order of the two-variable 
polynomial $q$ at $(0,0)$.
This means that $K$ a positive even integer that measures 
how the zero set of $q$ approaches $(0,0)$ in the following sense: 
\[ \text{inf} \left\{ |\Im (y)|: q(x,y) = 0\right\}  \asymp |x|^K,\]
for $x \in \mathbb{R}$ sufficiently close to $0$. 
See ``Theorem (Puiseux Factorizations)'' in the 
introduction of \cite{BKPS} and the later proof of that result for more information about contact order in the case of a pure stable polynomial
with a single, irreducible, degree $1$ Weierstrass polynomial.

\begin{proof}[Proof of Proposition \ref{prop:degex}] 
Since $\frac{\partial q}{\partial y}(0) \ne 0$, we can factor
\[
q(x,y) = u(x,y)\left(y + \psi(x)\right)
\]
for $\psi(x) \in\C\{x\}$ with $\psi(0)=0$, 
and $u(x,y) \in \C\{x,y\}$ with $u(0,0)\ne 0$.
Writing $\psi(x) = \sum_{k=1}^{\infty} a_k x^k$
we can apply Proposition \ref{phithm}
to see $a_1>0$ and there exists an even natural number
$\tilde{K}$ such that $a_j \in \R$ for $j<\tilde{K}$
and $\Im a_{\tilde{K}} >0$.
Thus for $x \in \R$ and $q(x,y) = 0$, 
\[
|\Im y| = |\Im(a_{\tilde{K}}) x^{\tilde{K}} + O(x^{\tilde{K}+1})|
\asymp |x|^{\tilde{K}}.
\]
By definition of contact order we must have $K = \tilde{K}$.

Define $p$ as in the statement of Proposition \ref{prop:degex}. 
Then, by the properties of $\beta = i(1-z)/(1+z)$ and $\beta^{-1}$, 
$p$ is a stable polynomial on $\mathbb{H}^3$ 
and has finitely many zeros on $\mathbb{R}^3$.
Furthermore, its zero set $\mathcal{Z}_p$ near $0$ is parameterized by $z+\phi(x,y)=0$, where
\[ \phi(x,y) = \psi\left( \frac{i(x+y) +2xy}{2i+x+y}\right). \]
One can check that 
\[ \frac{\partial^2 \phi}{\partial x^2} (0,0) = \frac{1}{4} \psi''(0) + \frac{i}{2} \psi'(0).\]
Since $K>2$, we know $\Im \psi''(0)=2\Im a_2=0$. Thus, using the notation of Proposition \ref{phithm},  $\Im \phi_2(x,y)$ includes the term $\frac{a_1}{4} x^2$ and so $\Im \phi_2(x,y) \not \equiv 0.$ However, restricting to $y=x$ gives
\[  \phi(x,x) = \psi(x)\]
and so $\Im \phi_k(x,x)=0$ for $k<K.$
\end{proof}
\color{black}

\section{Proofs part II: General isolated points}\label{proofspII}

First, we prove Theorem \ref{thm:isoideal}
assuming Lemma \ref{lociso}.

\begin{proof}[Proof of Theorem \ref{thm:isoideal}]
The zeros of $p$ near $0$ in $\R^{d+1}$ are given by
$z+ \phi(x) = 0$.
Since we assume the zero at $0$ is isolated
with respect to $\R^{d+1}$, by Remark \ref{phiremark}
we see that $\Im \phi(x) >0$ for $x$
near by not equal to $0$.
Applying Lemma \ref{lociso} to $f(x) = \Im \phi(x)$,
there exists a polynomial $g(x) \asymp f(x)$
and a natural number $K$ such that $f(x) \gtrsim |x|^K$.

Since $p$ has an isolated zero at $0$
in $\R^{d+1}$ we necessarily have $\nabla \phi(0)$
with all positive entries by Proposition \ref{phithm}.
We can then apply Proposition \ref{genprop}
to see
\[
\mathcal{I}^{\infty}_p = (z+\Re \phi(x), \mathrm{IC}( \Im \phi(x))).
\]
We can replace $\Im \phi(x) = f(x)$
with $g(x)$.
Since $g(x) \gtrsim |x|^K$, the ideal
$(x)^K$ is contained in the admissible numerator
ideal and we can reduce $\Re \phi(x)$ mod $(x)^K$
without changing the ideal.
Now, $\Re \phi(x)$ is equivalent to its $K-1$-th
order Taylor polynomial. Call this polynomial $H(x,y)$.

We arrive at the following representation of the 
admissible numerator ideal
\[
\mathcal{I}^{\infty}_p = (z+ H(x), \mathrm{IC}(g(x))).
\]
\end{proof}

As mentioned in Remark \ref{kollarremark}, Lemma \ref{lociso}
follows from the Łojasiewicz inequality.  We shall give a brief explanation
for this as well as a more explicit proof of Lemma \ref{lociso} in the case $d=2$.
Recall that the Łojasiewicz inequality says that given a real
analytic function $F:U\to \R$ defined on an open subset $U$ of $\R^d$, 
and given compact $V\subset U$, there exist positive constants $C, K$
such that for $x\in V$
\[
\text{dist}(Z_F, x)^{K} \leq C|F(x)|
\]
where $Z_F$ is the zero set of $F$.

\begin{proof}[Proof of Lemma \ref{lociso} for general $d$]
We apply the Łojasiewicz inequality to $f$ restricted to a 
neighborhood $U$ of $0$ where by assumption $Z_f\cap U = \{0\}$.
Then, in a compact neighborhood $V$ of $0$ within $U$ there exists 
$K$  such that
\[
|x|^K \lesssim f(x).
\]
We can take $K$ to be an integer since the inequality is true for larger
values.  Now let $g$ be the $K$-th order Taylor polynomial of $f$.
Then, since $f(x)-g(x) = O(|x|^{K+1})$, we have 
$c_1|x|^K\leq f(x) \leq g(x) + c_2|x|^{K+1}$
so that
$g(x) \gtrsim |x|^K$ near $0$.
Then, we automatically have $f(x) \lesssim g(x)$
and similarly $g(x)\lesssim f(x)$.    
\end{proof}

\begin{proof}[Proof of Lemma \ref{lociso} for $d=2$]
First, by the Weierstrass preparation theorem we
can write
\[
f(x,y) = u(x,y) \prod_{j=1}^{k} f_j(x,y)
\]
where $u \in \R\{x,y\}$ is a unit,
and each $f_j \in \C\{x\}[y]$ is an irreducible
Weierstrass polynomial with no zeros in $\R^2\setminus \{(0,0)\}$.
Since $f$ has real coefficients, 
each $f_j$ either has real coefficients
or comes with a conjugate pair $\bar{f}_j \in \{f_1,\dots, f_k\}$.

\begin{claim} \label{gnj}
For each $j \in \{1,\dots, k\}$ there exists
a Weierstrass polynomial
$g_{N,j}(x,y) \in \C[x,y]$ (which we emphasize has \emph{polynomial}
coefficients) 
whose coefficient polynomials agree with those of $f_j(x,y)$
to arbitrarily high order for $N$ large enough and which 
satisfies an estimate
$|g_{N,j}(x,y)| \gtrsim |(x,y)|^{K_j}$.
Also, if $f_j$ has a conjugate $f_i = \bar{f}_j$ then
$g_{N,i} = \bar{g}_{N,j}$.
\end{claim}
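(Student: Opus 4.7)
The plan is to take $g_{N,j}$ to be the coefficient-wise truncation at degree $N$ of $f_j$ in its Weierstrass form, and then transfer a lower bound for $|f_j|$ to one for $|g_{N,j}|$ via the triangle inequality. Writing $f_j(x,y) = y^{n_j} + \sum_{i=0}^{n_j-1} a_{j,i}(x) y^i$ with $a_{j,i} \in \C\{x\}$ vanishing at $x=0$, I will set $g_{N,j}(x,y) = y^{n_j} + \sum_{i=0}^{n_j-1} \tilde a_{j,i}(x) y^i$, where $\tilde a_{j,i}$ is the degree $N$ truncation of $a_{j,i}$.  This $g_{N,j}$ is then a genuine Weierstrass polynomial whose coefficients agree with those of $f_j$ to order $N$, and standard Cauchy estimates yield $|f_j(x,y) - g_{N,j}(x,y)| \le C_N |x|^{N+1}$ for $(x,y)$ in a fixed bounded neighborhood of the origin.

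The key analytic input will be a lower bound $|f_j(x,y)| \gtrsim |(x,y)|^{K_j}$ for $(x,y) \in \R^2$ near $(0,0)$, for some positive integer $K_j$ which I am free to take at least $n_j$.  Since $f_j$ has no zeros on $\R^2 \setminus \{(0,0)\}$, the real-analytic function $|f_j|^2$ restricted to $\R^2$ has an isolated zero at the origin, and the classical {\L}ojasiewicz inequality for real-analytic functions of two real variables supplies exactly such an estimate.  Alternatively, one can argue via Puiseux: the roots of the irreducible $f_j$ are $\psi_j(\zeta x^{1/n_j})$ as $\zeta$ ranges over the $n_j$-th roots of unity, and the absence of real roots for real $x \ne 0$ forces $|\Im \psi_j(\zeta x^{1/n_j})| \gtrsim |x|^{K_j/n_j}$ uniformly in $\zeta$, whose product over branches yields the claimed bound.

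To finish, I combine the two estimates and split into $|x| \ge |y|$ and $|y| > |x|$.  In the first region, $|(x,y)|^{K_j} \asymp |x|^{K_j}$, and for $N \ge K_j$ the error $C_N|x|^{N+1}$ is dominated by a small fraction of the main term once $(x,y)$ is close enough to the origin.  In the second region, $|(x,y)|^{K_j} \asymp |y|^{K_j}$ and $C_N |x|^{N+1} \le C_N |y|^{N+1}$ is similarly absorbed.  Hence $|g_{N,j}(x,y)| \ge |f_j(x,y)| - |f_j(x,y) - g_{N,j}(x,y)| \gtrsim |(x,y)|^{K_j}$, as required.  The conjugate-pair statement is immediate: truncation in $x$ commutes with complex conjugation of coefficients, so if $f_i = \bar f_j$ then $g_{N,i} = \bar g_{N,j}$.

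The step I expect to be the main obstacle is securing the {\L}ojasiewicz-type lower bound on $|f_j|$ with the explicit $|(x,y)|^{K_j}$ dependence rather than simply a bound in $|x|$ alone; the Puiseux route is concrete but needs a uniform treatment across all branches and both signs of $x$, while the real-analytic {\L}ojasiewicz route is cleaner but is essentially a black box.  The asymmetry of the error bound, $|x|^{N+1}$ rather than $|(x,y)|^{N+1}$, is a minor wrinkle cleanly dispatched by the case split above.
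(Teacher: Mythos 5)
Your proof is correct, and it takes a genuinely different route from the paper.  The paper constructs $g_{N,j}$ by factoring $f_j$ through Newton--Puiseux as $\prod_{n=1}^{r}(y-\psi(\mu^n x^{1/r}))$, truncating the single Puiseux series $\psi$ at degree $N$, and multiplying the branches back together; the symmetrization over branches is what makes the result a genuine polynomial in $x$, and, crucially, the lower bound $|g_{N,j}(x,y)|\gtrsim |(x,y)|^{K_j}$ is then obtained \emph{directly} for the truncation by observing that once $N$ exceeds the index $M$ of the last coefficient of $\psi$ contributing to the branch-by-branch estimate \eqref{psiest}, the same estimate persists verbatim for $\psi^{[N]}$.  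You instead truncate the Weierstrass coefficients $a_{j,i}(x)$ directly, which is a simpler construction (and still respects conjugation), but then you must obtain the lower bound for $g_{N,j}$ indirectly: first a {\L}ojasiewicz-type bound $|f_j(x,y)|\gtrsim |(x,y)|^{K_j}$ on $\R^2$ (or via Puiseux, which amounts to re-deriving what the paper's \eqref{psiest} gives), then a triangle-inequality transfer to $g_{N,j}$ using the error bound $|f_j-g_{N,j}|\lesssim |x|^{N+1}$ together with the case split on $|x|\ge|y|$ vs.\ $|y|>|x|$.  Both arguments are sound and have roughly the same weight: the paper's avoids invoking {\L}ojasiewicz as a black box and yields an explicit $K_j=\sum_n(M_n+1)$ determined by the Puiseux data, whereas yours is more modular and would generalize more readily to any complex-analytic germ with an isolated real zero, at the cost of either a black-box {\L}ojasiewicz exponent or a separate Puiseux analysis to extract $K_j$.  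Note that the paper in fact performs the same triangle-inequality transfer, with the same implicit case split, but one level up (comparing $W=\prod f_j$ to $g_N=\prod g_{N,j}$), so your argument is essentially pushing that step down to the individual factors.
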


Assuming the claim temporarily, we define
\[
g_N(x,y) := \prod_{j=1}^{k} g_{N,j}(x,y).
\]
Note that $g_N$ has real coefficients 
because the Puiseux branches of $f$ are either real or
occur in conjugate pairs.
Also, $g_N$
satisfies
\[
|g_N(x,y)| \gtrsim |(x,y)|^{K}
\]
for $K = \sum_{j=1}^{k} K_j$
and agrees with 
\[
W(x,y) := \prod_{j=1}^{k} f_j(x,y)
\]
up to arbitrarily high order as we increase $N$.
Since both $W(x,y)$ and $g_N(x,y)$ are monic in $y$ 
we can 
choose $N$ so that
\[
|g_N(x,y) - W(x,y)| \lesssim |x|^{K+1}
\]
where we absorb contributions from $y$ into the implicit constant.

Then,
\[
\left| 1- \frac{W(x,y)}{g_N(x,y)}\right| 
\lesssim \frac{|x|^{K+1}}{|(x,y)|^{K}} <1/2
\]
for $x$ sufficiently small, so
\[
W(x,y) \asymp g_N(x,y).
\]

This proves 
\[
f(x,y) \asymp g_N(x,y)
\]
and we have an appropriate bound below on both
$f(x,y)$ and $g_N(x,y)$.

Now we prove Claim \ref{gnj}.
We fix $j\in \{1,\dots, k\}$ and factor $f_j$ into 
\[
\prod_{n=1}^{r} (y - \psi(\mu^{n} x^{1/r}))
\]
where $r\in \mathbb{N}$, $\mu = \exp(2\pi i/r)$, and $\psi\in \C\{x\}$ by
the Newton-Puiseux theorem (see \cite[Chapters 6-7]{Fish} for details).
For $f_j(x,y)$ to be non-vanishing
for $(x,y)\ne (0,0)$ we must have
that $\psi(\mu^n x^{1/r}) \notin \R$ 
whenever $x\ne 0$.
We can discuss this with the fixed
choice of branches---i.e. $x^{1/r} > 0$ 
for $x>0$ and $x^{1/r} = |x|^{1/r} \exp(i\pi/r)$
for $x<0$---since as we vary over $n$ we cover all 
of the branches.

Writing out the power series 
$\psi(t) = \sum_{m\geq 1} \psi_m t^m$
for $x>0$, we have
\[
\psi(\mu^n x^{1/K}) 
= \sum_{m \geq 1} \psi_m \mu^{nm} x^{m/K},
\]
and there must exist a first coefficient, 
say $m=M^{+}_n$,
such that $\psi_m \mu_j^{nm} \notin \R$,
and for $x<0$ 
\[
\psi(\mu^n x^{1/r}) 
= \sum_{m \geq 0} \psi_m \mu^{nm} \exp(i\pi m/r) |x|^{m/r}
\]
there must exist a first
coefficient, say $m=M^{-}_n$, such that
$\psi_m \mu^{nm} \exp(i\pi m/r) \notin \R$.
In particular, for $M_{n} = \max\{M^{+}_n, M^{-}_n\}$,
\[
|\Im \psi(\mu^n x^{1/r})| \gtrsim |x|^{M_{n}/r}.
\]
Also, (since $|A|^N \lesssim_N |A-B|^N + |B|^N$)
\[
|y- \Re \psi(\mu^n x^{1/r})|\gtrsim 
|y - \Re \psi(\mu^n x^{1/r})|^{M_{n}+1} \gtrsim |y|^{M_{n}+1} - |x|^{(M_{n}+1)/r}, 
\]
for $x,y$ small enough.
Therefore,
\begin{equation} \label{psiest}
|y- \psi(\mu^n x^{1/r})| \gtrsim |y|^{M_n+1} + |x|^{M_{n}/r}
\gtrsim |(x,y)|^{M_{n}+1}.
\end{equation}

Let $M = \max\{M_{n}: n=1,\dots, r\}$ and $N >M$.
Define
\[
\psi^{[N]} (t) = \sum_{m=1}^{N} \psi_m t^m \in \C[t]
\]
and
\[
g_{N,j}(x,y) = \prod_{n=1}^{r}(y - \psi^{[N]}(\mu^n x^{1/r})).
\]
Now, $g_{N,j}$ is necessarily a polynomial
since we have an expression symmetric over all branches of $x^{1/r}$.
For $N>M$, the estimate \eqref{psiest} holds for
$\psi^{[N]}$ in place of $\psi$ 
since $N>M$ implies that we capture all of the coefficients
that contribute to the estimate \eqref{psiest}.

Therefore by \eqref{psiest} applied to $\psi^{[N]}$,
$|g_{N,j}(x,y)| \gtrsim |(x,y)|^{K_j}$
for $K_j := \sum_{n=1}^{r} (M_n+1)$.
As we increase $N$, the coefficients
of $g_{N,j}(x,\cdot)$ match those of $f_j(x,\cdot)$ 
to arbitrarily high order in $x$.
Also, note that
our construction respects conjugate pairs.
This proves Claim \ref{gnj} as well as the lemma.
\end{proof}

\section*{Acknowledgements}
We thank J\'anos Koll\'ar for helpful correspondence concerning higher-dimensional aspects of the work in \cite{Kollar} and especially for Remark \ref{kollarremark}.
Thank you to the referees for a careful reading and numerous useful suggestions.

\end{document}